\def\ZZ{{\mathbb Z}}
\def\RR{{\mathbb R}}
\def\Sphere{{\mathbb S}}
\def\mcA{{\mycal A}}
\newcommand{\f}{\varphi}
\def\bS{{\mathbb S}}
\def\eps{{\varepsilon}}
\newtheorem{theorem} {\sc  Theorem\rm} [section]
\newtheorem{proposition} [theorem] {\sc  Proposition\rm}
\newtheorem{prop} [theorem] {\sc  Proposition\rm}
\newtheorem{remark}[theorem]{\sc  Remark\rm}
\newtheorem{example}[theorem]{\sc  Example\rm}
\def\nd{\noindent}
\newcounter{marnote}
\DeclareFontFamily{OT1}{rsfs}{}
\DeclareFontShape{OT1}{rsfs}{m}{n}{ <-7> rsfs5 <7-10> rsfs7 <10-> rsfs10}{}
\DeclareMathAlphabet{\mycal}{OT1}{rsfs}{m}{n}
\newcommand{\h}{\mathcal{H}}
\def\be{\begin{equation}}
\def\ee{\end{equation}}
\newcommand{\R}{\mathbb{R}}
\newcommand{\Z}{\mathbb{Z}}
\def\be{\begin{equation}}
\def\ee{\end{equation}}
\def\bea#1\eea{\begin{align}#1\end{align}}
\numberwithin{equation}{section}
\begin{document}
\title{On the uniqueness of minimisers of Ginzburg-Landau functionals}

\author{Radu Ignat\thanks{Institut de Math\'ematiques de Toulouse \& Institut Universitaire de France, UMR 5219, Universit\'e de Toulouse, CNRS, UPS
IMT, F-31062 Toulouse Cedex 9, France. Email: Radu.Ignat@math.univ-toulouse.fr
}~, Luc Nguyen\thanks{Mathematical Institute and St Edmund Hall, University of Oxford, Andrew Wiles Building, Radcliffe Observatory Quarter, Woodstock Road, Oxford OX2 6GG, United Kingdom. Email: luc.nguyen@maths.ox.ac.uk}~, Valeriy Slastikov\thanks{School of Mathematics, University of Bristol, University Walk, Bristol, BS8 1TW, United Kingdom. Email: Valeriy.Slastikov@bristol.ac.uk}~ and Arghir Zarnescu\thanks{IKERBASQUE, Basque Foundation for Science, Maria Diaz de Haro 3,
48013, Bilbao, Bizkaia, Spain.}\,  \thanks{BCAM,  Basque  Center  for  Applied  Mathematics,  Mazarredo  14,  E48009  Bilbao,  Bizkaia,  Spain.
(azarnescu@bcamath.org)}\, \thanks{``Simion Stoilow" Institute of the Romanian Academy, 21 Calea Grivi\c{t}ei, 010702 Bucharest, Romania.}}

\date{}

\maketitle
\begin{abstract}
We provide necessary and sufficient conditions for the uniqueness of minimisers of the Ginzburg-Landau functional for $\R^n$-valued maps under a suitable convexity assumption on the potential and for $H^{1/2} \cap L^\infty$ boundary data that is non-negative in a fixed direction $e\in \bS^{n-1}$. Furthermore, we show that, when minimisers are not unique, 
the set of minimisers is generated from any of its elements using appropriate orthogonal transformations of $\mathbb{R}^n$. We also prove corresponding results for harmonic maps.\\

\noindent {\it Keywords: uniqueness, minimisers, Ginzburg-Landau, harmonic maps.}

\noindent {\it MSC: 35A02, 35B06, 35J50.}
\end{abstract}

\tableofcontents

\section{Introduction and main results}

We consider the following Ginzburg-Landau type energy functional
\be\label{def:energ}
E_\eps(u)=\int_\Omega \Big[\frac{1}{2}|\nabla u|^2+\frac{1}{2\eps^2}W(1 - |u|^2)\Big]\,dx,
\ee 
where $\eps>0$, $\Omega \subset \R^m$ ($m \geq 1$) is a bounded domain (i.e., open connected set) with smooth boundary $\partial \Omega$ and the potential $W\in C^1((-\infty,1],\R)$ satisfies 
\be\label{ass:W}
 W(0)=0,\, W(t)>0 \hbox{ for all } t\in (-\infty, 1]\setminus \{0\}, \, W \textrm{ is strictly convex}.
\ee
We investigate the minimisers of the energy $E_\eps$ over the following set
\be\label{def:mcA}
\mcA:=\{ u\in H^1(\Omega;\R^n):\,  u=u_{bd}\textrm{ on }\partial\Omega\}, \quad  n\geq 1,
\ee consisting of $H^1$ maps with a given 
boundary data (in the sense of $H^{1/2}$-trace on $\partial \Omega$):
$$u_{bd}\in H^{1/2}\cap L^\infty( \partial\Omega; \RR^n).$$ In particular, we are interested in the question of uniqueness (or its failure) for the minimisers of $E_\eps$ in $\mcA$ for all range of $\eps>0$. 

In the case $\eps>0$ is large enough and \eqref{ass:W} holds, the energy $E_\eps$ is strictly convex (see Remark \ref{rem:convex}); as $E_\eps$ is lower semicontinuous in the weak $H^1$-topology and coercive over $\mcA$, there is a unique critical point $u_\eps$ of $E_\eps$ over $\mcA$ and this critical point is the global minimiser of the energy $E_\eps$. The case when $\eps>0$ is ``not too large'' is much more complex and, in general, one has to impose additional assumptions on the potential $W$ or/and the boundary data to understand the  uniqueness of minimisers and its failure even in the limit $\eps  \rightarrow 0$ (i.e. the related case of minimising harmonic maps).

There exists a large literature using various methods that addresses the question of uniqueness of minimisers in the framework of Ginzburg-Landau type models and the related harmonic map problem. See e.g. B\'ethuel, Brezis and H\'elein \cite{BBH_book}, Mironescu \cite{Mironescu_symmetry}, Pacard and Rivi\`ere \cite{Pacard_Riviere}, Ye and Zhou \cite{YeZhou-NA96},
Farina and Mironescu \cite{FarinaMironescu-CVPDE13}, Millot and Pisante \cite{Mil-Pis}, Pisante \cite{Pisante_JFA}, 
Gustafson \cite{Gustafson}, 
J\"ager and Kaul \cite{JagerKaul-ManuM79, JagerKaul-MA79}, Sandier and Shafrir \cite{SandierShafrir-CVPDE94, SandierShafrir-JFA17} and the references therein.

The approach of this work is based on the method that {we} previously  successfully used in the investigation of  stability and minimality properties of the critical points in the context of Landau-de Gennes model of nematic liquid crystals \cite{DRSZ,INSZ3,INSZ_AnnIHP,INSZ_CVPDE}. We refer to this tool as the Hardy decomposition technique (see Lemma A.1. in \cite{INSZ3}). We show that under the global assumption \eqref{ass:W} on the potential $W$ (namely its convexity), and an additional  assumption on the boundary data (namely its non-negativity in a fixed direction), we can use Hardy decomposition method to  provide simple yet quite general proofs of uniqueness of minimisers of energy $E_\eps$ and characterise its failure. In our forthcoming paper {\cite{INSZ-GL}} we will provide sufficient conditions for uniqueness of minimisers  under less restrictive local conditions on the potential, but imposing more restrictive conditions on the boundary data.

We begin the exposition with providing a simple result on the uniqueness and symmetry of minimisers, which will be subsequently extended in a much more general setting.  Assume  $\Omega \subset \R^2$ is the unit disk, $u: \Omega \to \R^3$, and the boundary data carries a given winding number $k \in \ZZ \setminus \{0\}$ on $\partial \Omega$, namely\footnote{We note that $u_{bd}$ is non-negative in $e_3$-direction.}
\be
u_{bd}(\cos\varphi,\sin\varphi)=(\cos(k\varphi),\sin (k\varphi),0)\in \bS^{1}\times \{0\}\subset \R^3, \quad \forall \varphi\in [0,2\pi).
	\label{Eq:ubdDegk}
\ee  
By restricting $E_\eps$ to a subset of $\mcA$ (with $n=3$) consisting of suitable rotationally symmetric maps (see condition \eqref{rot_sym} below), it is not hard to see that $E_\eps$ admits critical points of the form
\be\label{rad-sol}
u_\eps(r\cos\varphi, r\sin \varphi):=f_\eps(r)(\cos(k\varphi),\sin (k\varphi),0) \pm g_\eps(r)(0,0,1), \, r\in (0,1), \, \varphi\in [0,2\pi),
\ee 
where the couple $(f_\eps, g_\eps)$ of radial profiles solves the system
\be\label{syst:ode}
\begin{cases}
-f_\eps''-\frac{1}{r} {f_\eps'}+\frac{k^2}{r^2}f_\eps &= \frac{1}{\eps^2} f_\eps \, W'(1-f_\eps^2-g_\eps^2)\\
-g_\eps''-\frac{1}{r} g_\eps' &=  \frac{1}{\eps^2}g_\eps \, W'(1 - f_\eps^2 - g_\eps^2)
\end{cases}
\quad \textrm{ in } \, (0,1), 
\ee
subject to the boundary conditions
\be
f_\eps(0)=0, f_\eps(1)=1, g_\eps'(0)=0, g_\eps(1)=0.
	\label{syst:bc}
\ee

Our first result is the following:

\begin{theorem}\label{cor:symmetry}
Let $\Omega=\{x\in \R^2\, :\,   |x| <1\}$ be the unit disk in $\R^2$, $u: \Omega \to \R^3$, $W\in C^1((-\infty, 1], \R)$ satisfy \eqref{ass:W}, and boundary data $u_{bd}$ be given by \eqref{Eq:ubdDegk} where $k\in\Z \setminus\{0\}$ is a given integer. Then the following conclusions hold.
  \begin{enumerate}
  \item For every $\eps > 0$, any minimiser $u_\eps$ of the energy $E_\eps$ in the set $\mcA$ (with $n=3$) has the representation \eqref{rad-sol} where $f_\eps > 0$, $g_\eps \geq 0$ in $(0,1)$ and $(f_\eps, g_\eps)$ solves \eqref{syst:ode}-\eqref{syst:bc}.
 \item There exists $\eps_k > 0$ such that, for every $\eps \in (0, \eps_k)$, \eqref{syst:ode}-\eqref{syst:bc} has a unique solution $(f_\eps, g_\eps)$ with $g_\eps > 0$, and for $\eps \in [ \eps_k, \infty)$, no such solution exists.
  
 \item For every $\eps > 0$, 
 there is exactly one solution $(\tilde f_\eps, \tilde g_\eps)$ of \eqref{syst:ode}-\eqref{syst:bc} satisfying $\tilde f_\eps > 0$, $\tilde g_\eps \equiv 0$. 
  
 \item  
 For every $\eps \in (0, \eps_k)$, $E_\eps$ has exactly two minimisers that are given by $(f_\eps, \pm g_\eps)$ via \eqref{rad-sol} with $g_\eps>0$, while the critical point of $E_\eps$ corresponding to $(\tilde f_\eps, \tilde g_\eps\equiv0)$  via \eqref{rad-sol} is unstable.
\end{enumerate}
\end{theorem}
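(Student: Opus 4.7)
\medskip
\noindent\textbf{Proof plan.}
My plan is to combine direct-method existence together with reflection and maximum-principle arguments (Part 1), a $1$D variational and ODE analysis of the reduced system \eqref{syst:ode} (Parts 2 and 3), and a second-variation/Hardy-decomposition argument (Part 4); smoothness of the critical points in consideration will be handled by standard elliptic regularity throughout.

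For Part 1, existence of a minimiser $u_\eps\in\mcA$ is classical. I would first exploit that the reflection $(u_1,u_2,u_3)\mapsto(u_1,u_2,-u_3)$ preserves both the boundary datum and the energy, so that the competitor $(u_{\eps,1},u_{\eps,2},|u_{\eps,3}|)$ has the same energy as $u_\eps$; up to this replacement I may assume $u_{\eps,3}\ge 0$, and the strong maximum principle for the component equation $-\Delta u_{\eps,3}=\eps^{-2}W'(1-|u_\eps|^2)u_{\eps,3}$ then forces either $u_{\eps,3}\equiv 0$ or $u_{\eps,3}>0$ in $\Omega$. The rotational symmetry of $(u_{\eps,1},u_{\eps,2})$ is extracted via the Hardy decomposition advertised in the introduction, applied against a radial critical point $u_*$ of the form \eqref{rad-sol} with $f_*>0,\,g_*\ge 0$ (whose existence comes from restricted minimisation in the rotationally symmetric class): the Hardy identity yields $E_\eps(u_\eps)\ge E_\eps(u_*)$ plus a nonnegative remainder that vanishes only when $u_\eps$ arises from $u_*$ by an orthogonal transformation fixing $u_{bd}$. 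Positivity $f_\eps>0$ then comes from ODE unique continuation together with $f_\eps(1)=1$.

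For Parts 2 and 3, I would substitute \eqref{rad-sol} into \eqref{def:energ} to obtain the reduced functional
\[
\tilde E_\eps(f,g)=\pi\int_0^1\Big[(f')^2+(g')^2+\tfrac{k^2}{r^2}f^2+\tfrac{1}{\eps^2}W(1-f^2-g^2)\Big]r\,dr
\]
subject to \eqref{syst:bc}, whose Euler--Lagrange system is \eqref{syst:ode}. The direct method in the class $\{g\equiv 0,\,f\ge 0\}$ yields $\tilde f_\eps>0$, whose uniqueness is a consequence of strict convexity of $\tilde E_\eps(\cdot,0)$ in $f^2$ inherited from the strict convexity of $W$, giving Part 3. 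For Part 2, I would characterise $\eps_k$ as the value at which the second variation in the $g$-direction,
\[
Q(\phi)=\int_0^1\Big(\phi'^2-\tfrac{1}{\eps^2}W'(1-\tilde f_\eps^2)\phi^2\Big)r\,dr,\qquad \phi'(0)=\phi(1)=0,
\]
loses positivity: for $\eps\ge\eps_k$ the branch $g\equiv 0$ is locally stable and no solution with $g_\eps>0$ exists, whereas for $\eps<\eps_k$ the leading negative eigenvector of $Q$ yields a bifurcation into a branch with $g_\eps>0$, and uniqueness of the latter follows from a strict-convexity/Hardy argument in the pair $(f,g)$ that absorbs the singular $\tfrac{k^2}{r^2}f^2$ contribution.

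For Part 4, the negative direction of $Q$ at $\eps<\eps_k$ also provides a variation $(0,s\phi)$ strictly decreasing $\tilde E_\eps(\tilde f_\eps,0)$, so the critical point of $E_\eps$ corresponding to $(\tilde f_\eps,0)$ via \eqref{rad-sol} is unstable and hence not a minimiser; combined with Part 1 and the identification of radial critical points from Parts 2--3, the minimisers of $E_\eps$ are exactly $(f_\eps,\pm g_\eps)$ via \eqref{rad-sol}. The main obstacle is the rotational-symmetry step in Part 1: since the potential is not convex in $u$, standard averaging or Steiner-type rearrangement do not apply, and one really needs the Hardy decomposition to absorb the non-convex part of the energy into a quadratic form anchored at the radial critical point $u_*$.
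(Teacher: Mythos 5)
Your outline shares several ingredients with the paper's proof (the reflection $u_3\mapsto|u_3|$ plus the strong maximum principle, the Hardy-type rigidity of Theorem \ref{thm:main} for escaping critical points, and deducing instability of the $\tilde g_\eps\equiv 0$ branch from the existence of an escaping minimiser), but it has two genuine gaps.

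First, your symmetry/classification argument in Part 1 only works in the escaping regime. You anchor the Hardy decomposition at a radial critical point $u_*$ of the form \eqref{rad-sol} with $f_*>0$, $g_*\geq 0$; but that decomposition requires dividing the perturbation by a component of $u_*$ that is positive throughout $\Omega$, and when $g_*\equiv 0$ no such component exists: $u_{*,1},u_{*,2}$ change sign, and $f_*$ is not a Cartesian component (its equation carries the extra $k^2/r^2$ term, so it cannot serve as a Hardy weight). Indeed, no quadratic-form argument anchored at $(\tilde f_\eps,0)$ can be correct in general, since that critical point is \emph{unstable} for $\eps<\eps_k$. Consequently, for $\eps\geq\eps_k$ --- precisely the regime where Part 1 must identify the minimiser with $(\tilde f_\eps,0)$ --- you have no proof that minimisers are radially symmetric or unique. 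The paper handles this non-escaping case by a different mechanism: uniqueness is obtained from the second-variation inequality combined with the competitor $\xi=u+\frac12 v+\frac12|v|\,e_3$ (Case B in the proof of Theorem \ref{thm:main_inv}, and Proposition \ref{Lem:InPlane+Stab=>Min}), and symmetry is then deduced \emph{from} uniqueness via the $\bS^1$-group action $u_{\eps,\theta}(x)=R_k(\theta)^{-1}u_\eps\big(\Pi(R_1(\theta)\cdot(x,0))\big)$, rather than by comparison against a symmetric competitor.

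Second, in Part 2 you assert, rather than prove, that stability of the $g\equiv 0$ branch holds exactly on a half-line $[\eps_k,\infty)$. A priori, the set of $\eps$ for which your quadratic form $Q$ is non-negative is only closed; nothing in your argument prevents it from being, say, a union of intervals, and your local bifurcation at $\eps_k$ produces escaping solutions only \emph{near} $\eps_k$, not on all of $(0,\eps_k)$. This monotonicity-in-$\eps$ is the core of the paper's Proposition \ref{prop:dicho} (Step 3): one rescales $\bar f_\eps(x)=\tilde f_\eps(\eps x)$ to the domain $\{|x|<\eps^{-1}\}$, uses the comparison principle of \cite{ODE_INSZ} to get $1\geq\bar f_\eps\geq\bar f_{\eps_k}$, hence $W'(1-\bar f_\eps^2)\leq W'(1-\bar f_{\eps_k}^2)$, and extends $\bar f_\eps$ by $1$ to transfer the stability inequality from $\eps_k$ to every $\eps>\eps_k$. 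Without this (or a substitute), your $\eps_k$ does not yield the statement that solutions with $g_\eps>0$ exist exactly for $\eps\in(0,\eps_k)$; moreover, the clean route to existence of such solutions for \emph{every} $\eps<\eps_k$ is that the global minimiser cannot coincide with the unstable branch and therefore escapes --- which again requires Part 1 in full strength, including the non-escaping case. As a minor remark, your hidden-convexity (convexity in $f^2$) idea for Part 3 is a reasonable alternative to the paper's citation of the ODE literature, but it directly gives uniqueness of \emph{minimisers}, and upgrading it to uniqueness of all positive solutions of \eqref{syst:ode} with $g\equiv 0$ requires an additional Brezis--Oswald-type argument that you should spell out.
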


\medskip

\begin{remark} \label{rem:ODEUniq} 
We will prove in Proposition \ref{prop:dicho} that the critical values $\eps_k$ in Theorem \ref{cor:symmetry} satisfy $\eps_k=\eps_{-k}$ for every $k\geq 1$ and the sequence $(\eps_{k})_{k\geq 1}$ is increasing. Moreover, for every $k\in \ZZ\setminus \{0\}$, the two minimisers of $E_\eps$ over $\mcA$ given by the the solution $(f_\eps, \pm g_\eps)$ of the system \eqref{syst:ode}-\eqref{syst:bc} with $g_\eps>0$  converge strongly in $H^2(\Omega)$ as $\eps\to 0$ to the two minimising harmonic maps (see e.g. \cite{BBH}):
$$(r, \varphi)\in (0,1)\times [0, 2\pi)\mapsto \left(\frac{2r^k}{1+r^{2k}} \cos (k\varphi), \frac{2r^k}{1+r^{2k}}\sin(k\varphi), \pm\frac{1-r^{2k}}{1+r^{2k}}\right)\in \bS^2.$$
\end{remark}

The situation presented above indicates that uniqueness of minimisers can be lost in a discrete way (two alternative minimisers). This is due to an additional degree of freedom in the target space and becomes evident in a more general setting, when we work with $\RR^n$-valued maps: {if an isometry of $\RR^n$ does not change the boundary data, then it transforms a minimiser into a minimiser}. This is an important feature of our results that will be presented in the general setting for arbitrary dimensions $m \geq 1$ and $n \geq 1$ of base and target spaces, respectively. The main restriction in our treatment is the assumption that the boundary data is non-negative in a (fixed) direction $e\in \bS^{n-1}$, i.e.,
\begin{equation}\label{ass:bd}
u_{bd} \cdot {e} \geq 0 \quad \h^{m-1}\text{-a.e. in } \partial \Omega.
\end{equation}

Before formulating our main results we need to provide some basic definitions. For the rest of the paper we adopt the following notation: for any integrable $\R^n$-valued map $u$ on a measurable set $\omega$ we denote the essential image of $u$ on $\omega$ by
\begin{equation}
u(\omega) = \{u(x): x \in \omega \textrm{ is a Lebesgue point of } u\}.
	\label{def_image}
\end{equation}
It is clear that for a continuous function $u$ the above definition coincides with the (standard) range of $u$ on $\omega$.
We also define $Span \, u(\omega)$ to be the smallest vector subspace of $\R^n$ containing 
$u(\omega)$. 

The following result establishes the minimising property and uniqueness (up to { certain isometries}) for {\it critical points} of $E_\eps$ that are \emph{positive} in a fixed direction $e\in \bS^{n-1}$ inside the domain $\Omega\subset \R^m$.

\begin{theorem}\label{thm:main}
Let $m, n \geq 1$, $\Omega\subset\R^m$ be a bounded domain with smooth boundary, potential $W\in C^1((-\infty,1],\R)$ satisfying \eqref{ass:W}, and a boundary data $u_{bd}\in H^{{1}/{2}}\cap L^\infty( \partial\Omega; \RR^n)$ satisfying \eqref{ass:bd} in a fixed direction $e\in \bS^{n-1}$. Fix any $\eps>0$ and define $u_\eps \in H^1\cap L^\infty(\Omega,\R^n)$ to be a critical point  of the energy $E_\eps$ in the set $\mcA$, that is positive in the direction $e$ inside $\Omega$:
\be\label{ass:Phi}
u_\eps \cdot {e}>0\textrm{ a.e. in }\Omega.
\ee 
Then $u_\eps$ is a minimiser of $E_\eps$ in $\mcA$ and we have the following dichotomy:

\begin{enumerate}
\item If there exists a Lebesgue point $x_0\in\partial\Omega$ of $u_{bd}$ such that $u_{bd}(x_0)\cdot {e}>0$ then $u_\eps$ is the unique minimiser of $E_\eps$ in the set $\mcA$.

\item If $u_{bd}(x)\cdot {e}=0$ for $\h^{m-1}$-a.e. $x\in \partial\Omega$, then all minimisers of  $E_\eps$ in $\mcA$ are given by $Ru_\eps$ where $R\in O(n)$ is an orthogonal transformation of $\R^n$ satisfying $Rx=x$ for all $x\in  Span \, u_{bd}(\partial\Omega)$ (within the definition \eqref{def_image}).
\end{enumerate}
\end{theorem}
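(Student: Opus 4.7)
The plan is to exploit the positivity of $u_\eps \cdot e$ in $\Omega$ via a Hardy-type identity built from the scalar function $\rho_\eps := u_\eps \cdot e$, which is the authors' advertised tool. Taking the $e$-component of the Euler-Lagrange equation $-\Delta u_\eps = \eps^{-2} W'(1-|u_\eps|^2)\, u_\eps$, the function $\rho_\eps > 0$ solves $-\Delta \rho_\eps = \lambda_\eps \rho_\eps$ with $\lambda_\eps := \eps^{-2} W'(1-|u_\eps|^2)$. Multiplying by $\rho_\eps \psi^2$ for a scalar test $\psi$ and integrating by parts yields the Hardy identity
\[
\int_\Omega \bigl(|\nabla(\rho_\eps\psi)|^2 - \lambda_\eps (\rho_\eps\psi)^2\bigr) \, dx = \int_\Omega \rho_\eps^2 |\nabla \psi|^2 \, dx \geq 0,
\]
valid whenever $\rho_\eps \psi$ has vanishing trace, with equality if and only if $\psi$ is constant.

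To establish the minimising property, for any competitor $v \in \mcA$ I would set $w := v - u_\eps \in H^1_0(\Omega;\R^n)$. Integration by parts against the Euler-Lagrange equation combined with the identity $|v|^2 - |u_\eps|^2 = |w|^2 + 2 u_\eps \cdot w$ and the convexity inequality $W(1-|v|^2) - W(1-|u_\eps|^2) \geq -W'(1-|u_\eps|^2)(|v|^2-|u_\eps|^2)$ collapses all cross-terms into
\[
E_\eps(v) - E_\eps(u_\eps) \geq \tfrac12 \int_\Omega \bigl(|\nabla w|^2 - \lambda_\eps |w|^2\bigr)\, dx.
\]
Applying the Hardy inequality component-by-component to $w_i \in H^1_0(\Omega)$ (with $\rho_\eps>0$ as the positive solution) shows the right-hand side is nonnegative, so $u_\eps$ is a minimiser.

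For the dichotomy I would track the equality cases. A second minimiser $v$ forces equality in (i) the convexity step, so strict convexity of $W$ gives $|v| = |u_\eps|$ a.e.; and (ii) the Hardy step componentwise, so $w_i = c_i \rho_\eps$ for constants $c_i \in \R$, i.e.\ $w = c\,\rho_\eps$ for a fixed $c \in \R^n$. In Case 1, the boundary trace $\rho_\eps(x_0) = u_{bd}(x_0)\cdot e > 0$ at a Lebesgue point $x_0$ together with $w|_{\partial\Omega} = 0$ forces $c = 0$, giving $v = u_\eps$. In Case 2, $\rho_\eps$ vanishes on $\partial\Omega$ and any $c$ is admissible by Hardy; substituting $v = u_\eps + c\,\rho_\eps$ into $|v|^2 = |u_\eps|^2$ and dividing by $\rho_\eps > 0$, after decomposing $u_\eps = \rho_\eps e + u_\eps^\perp$ with $u_\eps^\perp \perp e$, separates into the algebraic constraint $|e+c|^2 = 1$ together with the a.e.\ orthogonality $c^\perp \cdot u_\eps^\perp \equiv 0$ in $\Omega$, where $c^\perp$ is the projection of $c$ onto $e^\perp$.

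Finally I would identify $v = (I + c\otimes e)\, u_\eps$ with $R u_\eps$ for a suitable $R \in O(n)$ fixing $V := \mathrm{Span}\, u_{bd}(\partial\Omega)$. A trace argument propagates $c^\perp \cdot u_\eps^\perp \equiv 0$ from $\Omega$ to the boundary image $u_\eps^\perp|_{\partial\Omega} = u_{bd}$ (using that $u_{bd}\cdot e=0$), placing $c^\perp$ — and hence $c = c^\perp + (c\cdot e)e \in V^\perp$ (noting $e \in V^\perp$) — in $V^\perp$. Setting $e' := e+c$ (a unit vector in $V^\perp$), I would define $R \in O(n)$ to send $e \mapsto e'$ in the plane $P := \mathrm{span}(e,e')$ and to act as the identity on $P^\perp$; since $u_\eps^\perp$ takes values in $P^\perp$ (by $u_\eps^\perp \perp e$ and $u_\eps^\perp \perp c^\perp$), one verifies $R u_\eps = u_\eps + c\rho_\eps = v$, and $R$ fixes $V$ since $V \subset P^\perp$ by construction. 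Conversely, any $R \in O(n)$ fixing $V$ satisfies $R u_{bd} = u_{bd}$ and preserves the energy, so $R u_\eps$ is again a minimiser. The main obstacle I anticipate is this geometric step in Case 2 — translating the algebraic constraints $|e+c|=1$ and $c^\perp \perp u_\eps^\perp$ into an honest orthogonal map fixing the correct subspace — together with the trace argument identifying the boundary image of $u_\eps^\perp$ with $u_{bd}$; the Hardy/convexity portion is essentially formal bookkeeping once the identity is in place.
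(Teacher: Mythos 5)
Your Hardy/convexity scaffolding and your Case~1 are essentially the paper's own argument (Steps 1--2 and Situation~1 there): the energy-gap bound with equality iff $|v|=|u_\eps|$, the identity $F(w)=\frac12\int_\Omega \rho_\eps^2|\nabla(w/\rho_\eps)|^2\,dx$ applied componentwise, and the trace argument forcing $c=0$ when $u_{bd}\cdot e>0$ somewhere. The genuine gap is in Case~2, at the word ``separates''. After dividing by $\rho_\eps$ you have the \emph{single} scalar identity
\[
\bigl(|e+c|^2-1\bigr)\rho_\eps + 2\,c^\perp\cdot u_\eps^\perp = 0 \quad\text{a.e. in }\Omega,
\]
and nothing forces the two terms to vanish individually: $c^\perp\cdot u_\eps^\perp$ may be a nonzero constant multiple of $\rho_\eps$, and the identity can hold by cancellation. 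Concretely, take the boundary data \eqref{Eq:ubdDegk} viewed in $\R^4$, so $V=\R^2\times\{0_{\R^2}\}$, $e=e_4$, and take $\eps$ small enough that the solution $(f_\eps,g_\eps)$ of \eqref{syst:ode}--\eqref{syst:bc} with $g_\eps>0$ exists. Both $\hat u=(f_\eps\cos k\varphi,\,f_\eps\sin k\varphi,\,0,\,g_\eps)$ and its rotation $u_\eps=(f_\eps\cos k\varphi,\,f_\eps\sin k\varphi,\,g_\eps/\sqrt2,\,g_\eps/\sqrt2)$ are critical points in $\mcA$ satisfying \eqref{ass:Phi} with $e=e_4$, hence both are minimisers by the first (correct) part of your argument. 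Applying your Case~2 analysis to the competitor $v=\hat u$: one finds $\hat u-u_\eps=c\,\rho_\eps$ with $\rho_\eps=g_\eps/\sqrt2$ and $c=(0,0,-1,\sqrt2-1)$, so that $|e+c|^2=3\neq1$ and $c^\perp\cdot u_\eps^\perp=-\rho_\eps\not\equiv0$. Thus your algebraic constraint $|e+c|=1$ is false for a general critical point satisfying \eqref{ass:Phi}, the vector $e':=e+c$ is not a unit vector, and the orthogonal map you propose to build from it does not exist. (Your separation does hold automatically when ${\rm codim}_{\R^n}V=1$, e.g. $n=3$ in Theorem \ref{cor:symmetry}, which may be why it looks plausible; it fails as soon as $e^\perp\cap V^\perp\neq\{0\}$, i.e. precisely when the critical point can have a nontrivial component in $e^\perp\cap V^\perp$.)

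The missing idea is the paper's normalisation step before the classification. Choosing coordinates with $e=e_n$ and $V=\R^k\times\{0_{\R^{n-k}}\}$, one replaces $u_\eps$ by
\[
\tilde u:=\bigl(u_1,\dots,u_k,0,\dots,0,\sqrt{u_{k+1}^2+\dots+u_n^2}\,\bigr),
\]
as in \eqref{transf_n}; this is again a minimiser (it does not increase the energy), is still positive in the direction $e_n$, and has \emph{identically vanishing} component in $e^\perp\cap V^\perp$. For this reduced minimiser, $\tilde u^\perp$ takes values in $V$, so taking traces in the displayed identity gives $c_V\cdot u_{bd}=0$ $\h^{m-1}$-a.e. on $\partial\Omega$; picking Lebesgue points whose images form a basis of $V$ kills the $V$-part of $c$, and only then does the identity collapse to the purely algebraic constraint $c_{k+1}^2+\dots+c_{n-1}^2+(c_n+1)^2=1$, from which the orthogonal transformation $R$ fixing $V$ with $Re_n=(0,\dots,0,c_{k+1},\dots,c_{n-1},c_n+1)$ is read off. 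Finally one transfers back: every minimiser equals $R\tilde u$ for such an $R$; since $u_\eps$ is itself a minimiser, $u_\eps=R_0\tilde u$ for some such $R_0$, and hence every minimiser equals $(RR_0^{-1})u_\eps$ with $RR_0^{-1}$ fixing $V$. Without this reduction your Case~2 cannot be completed as written.
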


\bigskip
Using the above theorem, we prove the following result which completely characterises uniqueness and its failure for minimisers of the energy $E_\eps$. Note that the positivity assumption \eqref{ass:Phi} in a direction $e$ will not be enforced anymore on minimisers inside $\Omega$; however the non-negativity \eqref{ass:bd} of the boundary data $u_{bd}$ is fundamental.

\begin{theorem}\label{thm:main_inv}
Let $m$, $n$, $\Omega$, $W$, $u_{bd}$ be as in Theorem~\ref{thm:main}, $e\in \mathbb{S}^{n-1}$ be such that \eqref{ass:bd} holds and $V \equiv Span \, u_{bd}(\partial\Omega)$. Then for every $\eps>0$ there exists a minimiser $u_\eps$ of the energy  
$E_\eps$ on the set $\mcA$ and this minimiser is unique unless both following conditions hold:
\begin{enumerate}
\item $u_{bd}(x)\cdot {e}=0$ $\h^{m-1}$-a.e. $x\in \partial\Omega$,

\item the functional $E_\eps$ restricted to the set
\[
\mcA_{res} := \{u \in \mcA: u(x) \in Span(V \cup\{{e}\}) \text{ a.e.  in } \Omega\}
\]
has a minimiser $\tilde u_\eps$ with $\tilde u_\eps(\Omega) \not\subset V$. \footnote{We will see that any minimiser $\tilde u_\eps$ belongs to $C^1(\Omega)$, therefore, $\tilde u_\eps(\Omega)$ has the standard meaning. Moreover, if $\tilde u_\eps(\Omega) \not\subset V$, then the minimiser $\tilde u_\eps$ of $E_\eps$ over $\mcA_{res}$ is unique up to the reflection $\tilde u_\eps\cdot e\mapsto -\tilde u_\eps\cdot e$ (that keeps $u_{bd}$ unchanged due to point 1.).}
\end{enumerate}
Moreover, if uniqueness of minimisers of $E_\eps$ in $\mcA$ does not hold, then all minimisers of  $E_\eps$ in $\mcA$ are given by $Ru_\eps$ where $R\in O(n)$ is an orthogonal transformation of $\R^n$ satisfying $Rx=x$ for all $x\in V$.
\end{theorem}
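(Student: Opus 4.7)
The plan is to combine existence by the direct method with a two-stage folding argument that reduces any $\mcA$-minimiser to the restricted class $\mcA_{res}$, after which Theorem~\ref{thm:main} can be invoked with target space $W_1 := Span(V \cup \{e\})$. Existence is standard: under \eqref{ass:W}, $E_\eps$ is non-negative, coercive on $\mcA$, and weakly lower semi-continuous, so the direct method applies.

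The first folding I would set up is the $e$-fold $u \mapsto u^{(1)} := |u\cdot e|\,e + (u - (u\cdot e)\,e)$. The identity $|\nabla |f||^2 = |\nabla f|^2$ a.e., valid for any $H^1$ scalar $f$, gives $E_\eps(u^{(1)}) = E_\eps(u)$, and the boundary condition is preserved by \eqref{ass:bd}. Starting from any minimiser $u_\eps$, folding yields another minimiser $u_\eps^{(1)}$ with $w := u_\eps^{(1)}\cdot e \ge 0$ in $\Omega$. Since $u_\eps^{(1)}$ is smooth by elliptic regularity and $w$ solves the linear elliptic equation $-\Delta w = \eps^{-2} W'(1-|u_\eps^{(1)}|^2)\,w$, the strong maximum principle yields the dichotomy: $w > 0$ in $\Omega$ or $w \equiv 0$. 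If condition~1 fails, then $w|_{\partial\Omega}$ is non-trivial and non-negative, forcing $w > 0$ in $\Omega$, and Theorem~\ref{thm:main}(1) immediately gives uniqueness.

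If condition~1 holds, so that $V \perp e$ and $W_1 = V \oplus \mathrm{span}(e)$, I would perform a second, vector-valued folding: with $P$ the orthogonal projection onto $V$, define $\Phi(u) := Pu + |u - Pu|\,e \in \mcA_{res}$. The vector Kato inequality gives $E_\eps(\Phi(u)) \le E_\eps(u)$, so the infima over $\mcA$ and over $\mcA_{res}$ coincide and every $\mcA_{res}$-minimiser is an $\mcA$-minimiser. For any $\mcA$-minimiser $u$, the equality case in Kato, together with unique continuation for the linear system solved by $u - Pu$, forces this component to be either identically zero or, a.e., parallel to a fixed unit vector $\nu \in V^\perp$. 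Applying Theorem~\ref{thm:main} inside $W_1$ to $\tilde u_\eps := \Phi(u_\eps^{(1)})$ and using the SMP dichotomy for $\tilde u_\eps \cdot e$ then gives two sub-cases: either some $\mcA_{res}$-minimiser satisfies $\tilde u_\eps \cdot e > 0$ in $\Omega$ (condition~2), in which case Theorem~\ref{thm:main}(2) inside $W_1$ identifies $\tilde u_\eps$ and its $e$-reflection as the only $\mcA_{res}$-minimisers and, combined with the freedom in $\nu$, yields the full family $\{R\tilde u_\eps : R \in O(n),\, R|_V = \mathrm{id}\}$ of $\mcA$-minimisers; or every $\mcA_{res}$-minimiser has $\tilde u_\eps \cdot e \equiv 0$ (condition~2 fails), so every such minimiser lies in $V$, and $\Phi$ then forces every $\mcA$-minimiser to be $V$-valued.

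The main obstacle is this last sub-case, where I must still establish uniqueness among $V$-valued minimisers: Theorem~\ref{thm:main} cannot be applied directly because no distinguished direction of strict positivity within $V$ is guaranteed by the hypotheses. I would resolve it by applying the Hardy decomposition of \cite{INSZ3} (the main technical tool underlying Theorem~\ref{thm:main}) directly to two hypothetical $V$-valued minimisers, using the strict convexity of $W$ to collapse them into a single one. This adaptation, rather than the folding scheme itself, is where I expect the principal technical difficulty to lie.
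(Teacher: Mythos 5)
Your skeleton matches the paper's: existence by the direct method, the fold $u\cdot e \mapsto |u\cdot e|$, the strong maximum principle dichotomy ($u\cdot e>0$ or $u\cdot e\equiv 0$), the reduction via the projection fold $\Phi(u)=Pu+|u-Pu|\,e$ to a minimiser in $\mcA_{res}$, and the invocation of Theorem~\ref{thm:main} in Situation~1 and in the escaping sub-case. All of that is sound and is exactly how the paper proceeds. But the proposal has a genuine gap at precisely the point you flag as "the main obstacle": uniqueness in the non-escaping sub-case, where every $\mcA_{res}$-minimiser is $V$-valued. Your proposed fix --- applying the Hardy decomposition of \cite{INSZ3} ``directly to two hypothetical $V$-valued minimisers'' --- cannot work. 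The Hardy decomposition requires dividing the perturbation by a component of the minimiser that is \emph{strictly positive} throughout $\Omega$, and in the non-escaping case no such component is available: the components $u_1,\dots,u_k$ of a $V$-valued minimiser may change sign (think of boundary data with nonzero winding number in $V$). The paper states this obstruction explicitly: ``we cannot use the Hardy decomposition trick anymore \dots as any component $u_1,\dots,u_k$ of $u$ could change sign.'' So the step you defer is not a routine adaptation; it requires a different idea.

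The paper's actual resolution of this sub-case is worth recording, since it is the heart of the theorem. Suppose $w=u+v$ is a second minimiser in $\mcA$, where (after folding) $u$ and $w$ both have components $k+1,\dots,n$ identically zero. Since $u$ minimises $E_\eps$ over $\mcA_{res}$ and $u\cdot e_n\equiv 0$, the second variation in the scalar direction $\varphi e_n$ is non-negative:
\begin{equation*}
0 \leq \int_{\Omega} \Big[|\nabla \varphi|^2 - \tfrac{1}{\eps^2} W'(1-|u|^2 ) \varphi^2\Big]\,dx
\quad \text{ for all } \varphi \in H^1_0(\Omega).
\end{equation*}
Taking $\varphi=v_j$ and summing over $j$ shows $F(v)\geq 0$, while the convexity inequality of Step~1 of Theorem~\ref{thm:main} gives $0=E_\eps(u+v)-E_\eps(u)\geq F(v)$; hence equality holds, which forces $|u+v|=|u|$ a.e.\ in $\Omega$. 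One then builds the midpoint competitor
\begin{equation*}
\xi = u + \tfrac12 v + \tfrac12 |v|\, e_n \in \mcA_{res},
\end{equation*}
which satisfies $|\xi|^2=\tfrac12\big(|u|^2+|u+v|^2\big)=|u|^2$ and, by the Kato inequality $|\nabla|v||\leq|\nabla v|$, also $E_\eps(\xi)\leq \tfrac12\big(E_\eps(u)+E_\eps(w)\big)=E_\eps(u)$. Thus $\xi$ is an $\mcA_{res}$-minimiser with $\xi\cdot e_n=\tfrac12|v|\not\equiv 0$, contradicting the non-escaping assumption. This stability-plus-midpoint argument, not the Hardy decomposition, is what closes Case~B; without it (or a substitute), your proof is incomplete.
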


Loosely speaking, the above result asserts that if $u_\eps$ is a minimiser of $E_\eps$ in $\mcA$ then 
\begin{itemize}
\item either $u_\eps(\Omega) \subset Span\, u_{bd}(\partial \Omega)$ and $u_\eps$ is the unique minimiser of $E_\eps$ in $\mcA$ (a phenomenon that we call ``non-escaping" combined with ``uniqueness" of minimisers, see the discussion below);
\item or $u_\eps(\Omega) \not\subset Span\, u_{bd}(\partial \Omega)$, and all minimisers of $E_\eps$ in $\mcA$ differ from $u_\eps$ by an orthogonal transformation which fixes every point of $Span\, u_{bd}(\partial \Omega)$ (that we call ``escaping" phenomenon combined with ``multiplicity"  of minimisers, see below).
\end{itemize}

\medskip

\noindent {\bf Harmonic map problem}. We note that Theorem \ref{thm:main_inv} is similar to a well-known result of Sandier and Shafrir \cite{SandierShafrir-CVPDE94} on the uniqueness of minimising harmonic maps into a closed hemisphere. In fact, our proof of Theorem \ref{thm:main_inv} can be adapted to give an alternative proof of their result. Our argument does not assume the smoothness of boundary data and does not use the regularity theory of minimising harmonic maps, which appears to play a role in the argument of \cite{SandierShafrir-CVPDE94}.

\begin{theorem}\label{prop:HMP_inv}
Let $m\geq 1$, $n \geq 2$, $\Omega\subset\R^m$ be a bounded domain with smooth boundary, $u_{bd} \in H^{1/2}(\partial\Omega;\mathbb{S}^{n-1})$ be a boundary data satisfying \eqref{ass:bd} in a direction $e\in \bS^{n-1}$, and $V \equiv Span\, u_{bd}(\partial\Omega)$. Then there exists a minimising harmonic map $u \in \mcA \cap H^1(\Omega;\mathbb{S}^{n-1}) $ and this minimising harmonic map is unique unless both following conditions hold:
\begin{enumerate}
\item $u_{bd}(x)\cdot {e}=0$ $\h^{m-1}$-a.e. $x\in \partial\Omega$,

\item  the Dirichlet energy $E(w) = \frac{1}{2}\int_\Omega |\nabla w|^2\,dx$ restricted to the set
\[
\mcA_{res}^* := \{w \in \mcA \cap H^1(\Omega;\mathbb{S}^{n-1}): w(x) \in Span(V \cup\{{e}\}) \text{ a.e.  in } \Omega\}
\]
has a minimiser $\tilde u$ with $\tilde u(\Omega) \not\subset V$.
\end{enumerate}
Moreover, if $u$ is not the unique minimising harmonic map in $\mcA \cap H^1(\Omega;\mathbb{S}^{n-1})$, then all minimising harmonic maps in $\mcA \cap H^1(\Omega;\mathbb{S}^{n-1})$ are given by $Ru$ where $R\in O(n)$ is an orthogonal transformation of $\R^n$ satisfying $Rx=x$ for all $x\in V$.
\end{theorem}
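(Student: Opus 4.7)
The plan is to adapt the proof of Theorem \ref{thm:main_inv}, whose main ingredient is the Hardy decomposition used in Theorem \ref{thm:main}, to the sphere-constrained setting. First, existence of a minimising harmonic map follows by the direct method: the admissible set $\mcA \cap H^1(\Omega;\bS^{n-1})$ is nonempty since $u_{bd}(\partial \Omega)\subset \{p\in\bS^{n-1}:p\cdot e\geq 0\}$ is contractible (giving zero topological obstruction to an $\bS^{n-1}$-valued $H^1$-extension---trivial for $n=2$, by Hardt--Lin for $n\geq 3$), the Dirichlet energy is weakly lower-semicontinuous, and the constraint set is weakly closed. Next, for any minimiser $u$, replacing $u\cdot e$ by $|u\cdot e|$ while leaving the $e^\perp$-component unchanged yields a map in $\mcA\cap H^1(\Omega;\bS^{n-1})$ with the same Dirichlet energy; hence we may assume $u\cdot e \geq 0$ a.e. Since the harmonic map equation $-\Delta u = |\nabla u|^2 u$ yields $-\Delta(u\cdot e) = |\nabla u|^2 (u\cdot e)$ distributionally, and since by Schoen--Uhlenbeck partial regularity $u$ is smooth outside a closed singular set $\Sigma\subset\Omega$ with $\dim_{\h}\Sigma\leq m-3$, the strong maximum principle on $\Omega\setminus\Sigma$ yields the dichotomy: either $u\cdot e\equiv 0$ in $\Omega$ or $u\cdot e>0$ in $\Omega\setminus\Sigma$.

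The core step is a harmonic-map analogue of Theorem \ref{thm:main}: given a minimiser $u$ with $u\cdot e>0$ and any competitor $v\in\mcA\cap H^1(\Omega;\bS^{n-1})$ with $v\cdot e\geq 0$, the Hardy decomposition of \cite{INSZ3} (Lemma A.1) produces
\[
\tfrac{1}{2}\int_\Omega |\nabla v|^2\,dx \;\geq\; \tfrac{1}{2}\int_\Omega|\nabla u|^2\,dx,
\]
with equality if and only if $v=Ru$ for some $R\in O(n)$ that pointwise fixes $V$. Concretely, one expresses $v$ via a measurable rotation of $u$, uses the pointwise identity $|u|=|v|=1$, and recovers a non-negative remainder after integration by parts; the computation is strictly simpler than for the Ginzburg--Landau energy since the potential term is absent. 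The cutoff near $\Sigma$ passes to the limit because $\Sigma$ has zero $H^1$-capacity.

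The dichotomy in the theorem then follows exactly as in Theorem \ref{thm:main_inv}. If $u_{bd}\cdot e>0$ at some Lebesgue point of $\partial\Omega$, no nontrivial $R\in O(n)$ fixing $V$ preserves the boundary data, so the Hardy inequality forces $v=u$, giving uniqueness. If instead $u_{bd}\cdot e\equiv 0$ a.e.\ on $\partial \Omega$, then $V\subset e^\perp$ and the reflection $e\mapsto -e$ lies in the pointwise stabiliser of $V$; this produces a second minimiser precisely when the restricted problem on $\mcA_{res}^*$ admits a minimiser $\tilde u$ with $\tilde u(\Omega)\not\subset V$, i.e.\ when the minimiser ``escapes'' $V$ via the direction $e$. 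Conversely, if no minimiser escapes, every minimiser lies in $V$ and is fixed by every such $R$, and the Hardy argument applied inside $V$ yields uniqueness. The full classification of minimisers as $\{Ru:R\in O(n),\,Rx=x\,\text{for all}\,x\in V\}$ then follows by the same orbit argument as in Theorem \ref{thm:main_inv}.

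The main obstacle is to make the Hardy decomposition fully rigorous in spite of the possible singular set of the minimiser and the mere $H^1$-regularity of the competitors. Unlike Ginzburg--Landau critical points, which are smooth by elliptic regularity, minimising harmonic maps in dimensions $m\geq 3$ require truncation near $\Sigma$ and a careful limiting procedure exploiting $\mathrm{cap}_{H^1}(\Sigma)=0$. The payoff relative to \cite{SandierShafrir-CVPDE94} is that the argument requires neither smoothness of $u_{bd}$ nor boundary regularity theory for minimising harmonic maps.
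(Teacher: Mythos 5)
Your proposal reproduces the paper's overall architecture (Hardy decomposition for ``escaping'' minimisers, maximum-principle dichotomy for $u\cdot e$, then a classification), but it has a genuine gap at the heart of the ``unique \emph{unless} both conditions hold'' claim, namely the case where condition 1 holds but condition 2 \emph{fails}. You dispose of it with ``if no minimiser escapes, every minimiser lies in $V$ \ldots and the Hardy argument applied inside $V$ yields uniqueness.'' This is precisely where the Hardy argument is unavailable: a non-escaping minimiser has $u\cdot e\equiv 0$, so there is no positive direction to decompose against; and the assertion that every minimiser of the \emph{full} problem lies in $V$ is exactly what must be proved --- a priori a minimiser in $\mcA\cap H^1(\Omega;\bS^{n-1})$ could escape even though no minimiser of the restricted problem on $\mcA_{res}^*$ does. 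The paper closes this case (Step 4, Case B of Theorem~\ref{thm:main_inv}, transferred to the sphere-valued setting) by contradiction: if $w=u+v$, $v\in H^1_0$, $v\not\equiv 0$, were a second minimiser, one first reduces via the symmetrisation \eqref{transf_n} to $w\in\mcA_{res}^*$; since both maps are $\bS^{n-1}$-valued, $|u+v|=|u|=1$ gives the pointwise relation $2u\cdot v+|v|^2=0$, and then the explicit competitor $\xi=u+\tfrac12 v+\tfrac12|v|\,e$ satisfies $|\xi|=1$ a.e., lies in $\mcA_{res}^*$, has energy $E(\xi)\le\tfrac12\big(E(u)+E(w)\big)=E(u)$ by the parallelogram identity and $|\nabla|v||\le|\nabla v|$, and has $\xi\cdot e=\tfrac12|v|\not\equiv 0$ --- contradicting the assumed failure of condition 2. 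Without this competitor construction (or a substitute), the implication ``condition 2 fails $\Rightarrow$ uniqueness'' is unproven, and the theorem as stated does not follow.

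A second, smaller flaw is your Situation 1 reasoning: ``no nontrivial $R\in O(n)$ fixing $V$ preserves the boundary data'' is false, since any $R$ fixing $V$ pointwise satisfies $Ru_{bd}=u_{bd}$ a.e.\ (the boundary values lie in $V$ by definition of $V$). Uniqueness when $u_{bd}(x_0)\cdot e>0$ at a Lebesgue point comes instead from the equality-case analysis inside the Hardy decomposition, which you treat as a black box: equality forces the difference of two minimisers to equal $\lambda\,(u\cdot e)$ for a constant vector $\lambda\in\R^n$, and $\lambda\neq 0$ would put $u\cdot e$ in $H^1_0(\Omega)$, i.e.\ give it zero trace, contradicting $u_{bd}(x_0)\cdot e>0$; your ``measurable rotation of $u$'' device does not produce this linear structure. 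The remaining discrepancies are stylistic rather than fatal: you obtain the dichotomy from Schoen--Uhlenbeck partial regularity and the strong maximum principle off the singular set, and existence from Hardt--Lin extension theory, whereas the paper uses only the weak Harnack inequality for the superharmonic $H^1$ function $|u\cdot e|$ (plus a continuation argument on paths) and an explicit Lipschitz retraction of the harmonic extension onto the closed hemisphere. Your heavier route is workable for minimisers, but it forfeits the point the paper emphasises --- that, unlike Sandier--Shafrir, no regularity theory for minimising harmonic maps is needed --- and the claim that the $n=2$ extension problem is ``trivial'' by contractibility glosses over the delicacy of $H^{1/2}$ lifting, which the retraction argument avoids entirely.
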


Similar to the argument for Theorem \ref{thm:main_inv}, the proof of  Theorem \ref{prop:HMP_inv} uses an analogue of Theorem \ref{thm:main} for weakly harmonic maps positive in a given direction $e$ inside $\Omega$; this  result is given in Theorem \ref{prop:HMP}.

\bigskip

\noindent {\bf Relation between uniqueness, stability, and ``escaping" phenomenon}. 
We would like to point out some implications of Theorems~\ref{thm:main_inv} and \ref{prop:HMP_inv} when the boundary data $u_{bd}$ {\it vanishes in a direction $e$ at the boundary}, i.e.,  ${\rm codim}_{\R^n} Span \, u_{bd}(\partial \Omega)\geq 1$. In this case we have the following observations:

\smallskip

{\bf i)} The uniqueness of minimisers in Theorems \ref{thm:main_inv} and \ref{prop:HMP_inv} is equivalent to the confinement of the range $u(\Omega)$ of minimisers $u$ of $E_\eps$ (resp. of $E$) inside the vector space $V=Span\, u_{bd}(\partial \Omega)$, which we will refer to as the ``non-escaping" phenomenon.
$$
\textrm{\bf Uniqueness of minimiser} \quad  \Longleftrightarrow \quad \textrm{\bf ``Non-escaping" phenomenon for minimiser}.
$$

The opposite situation when the range $u(\Omega)$ is not confined inside the vector space $V$ (i.e., $u_\eps(\Omega) \not\subset Span\, u_{bd}(\partial \Omega)$) - that we call ``escaping" phenomenon - often occurs in physical models (e.g. micromagnetic vortices, liquid crystal defects):  in order to lower the energy, the minimisers prefer to escape along a direction $e$ transversal to the natural vector space $V$ where boundary data $u_{bd}$ lives. This ``escape''  is usually enforced by topological constraints in the boundary data $u_{bd}$.  

We note that the ``escaping" phenomenon dictates the loss of uniqueness of a minimiser in the following way: the minimiser $\tilde u_\eps$ of $E_\eps$ over $\mcA_{res}$ (resp. $\tilde u$ of $E$ over $\mcA_{res}^*$) is unique up to the reflection {about the vector space orthogonal to $e$} (see Remark~\ref{rem:nonescape}). All the other minimisers (if any) of $E_\eps$ over $\mcA$ (resp. of $E$ over $\mcA\cap H^1(\Omega; \mathbb{S}^{n-1})$) are obtained by orthogonal transformations of $\tilde u_\eps$ (resp. $\tilde u$) fixing all points of $V$.

\medskip

{\bf ii)} The ``escaping" phenomenon is closely related to stability properties of critical points. In particular, in Theorem \ref{thm:main} (resp. Theorem \ref{prop:HMP}) we show that every ``escaping" critical point $u$ (i.e., \eqref{ass:Phi} holds inside $\Omega$) of $E_\eps$ over $\mcA$ (resp. of $E$ over $\mcA\cap H^1(\Omega; \mathbb{S}^{n-1})$) is in fact a minimiser and there are multiple minimisers as one can reflect $u$ about the orthogonal space to the escaping direction. On the contrary, in Propositions \ref{Lem:InPlane+Stab=>Min} and \ref{Lem:HPInPlane+Stab=>Min} we show that if a critical point of $E_\eps$ over $\mcA$ (resp. of $E$ over $\mcA\cap H^1(\Omega; \mathbb{S}^{n-1})$) does not ``escape" then its stability is equivalent with its minimality and by point {\bf i)} it is the unique minimiser. 

Therefore, we have the following dichotomy in the behaviour of minimisers: 
\begin{itemize}
\item either {\bf ``escaping" of minimisers}:  

\noindent $\{\textrm{\bf ``escaping critical points}\}=\{\textrm{\bf minimisers}\}$ with at least two elements;

\item  or {\bf ``non-escaping" of minimisers}:  

\noindent $\{\textrm{\bf ``Non-escaping" stable critical point}\}= \{\textrm{\bf minimiser}\}$ with only one element.
\end{itemize}
As showed at Theorem \ref{cor:symmetry}, point 3., one can have at the same time ``escaping" minimisers and ``non-escaping" {\it unstable} critical points. 

\medskip

{\bf iii)} In the case of the harmonic map problem, if $V$ has dimension $m-1$ and the boundary data $u_{bd}$ is smooth and carries a {\it nontrivial topological degree} as a map $u_{bd}:\partial \Omega\to V\cap \mathbb{S}^{n-1}$ with $n>m\geq 2$, then \footnote{The fact that ``escaping phenomenon" implies smoothness is a consequence of the boundary regularity result of Schoen and Uhlenbeck \cite{SU-Bdry} and the regularity result of J\"ager and Kaul \cite{JagerKaul-ManuM79} for maps with values into $\bS^{n-1}\cap\{u_{n}>\delta\}$ for some $\delta>0$. The other implication is obvious arguing by contraposition: ``non-escaping" smooth maps $u:\Omega\to V\cap \mathbb{S}^{n-1}\simeq \mathbb{S}^{m-1}$ have zero Jacobian determinant as the partial derivatives $\partial_1 u, \dots, \partial_m u$ are linearly dependant in the tangent space $T_u \mathbb{S}^{m-1}$ and therefore, $u$ carries a zero topological degree at the boundary $\partial \Omega$. } 
the following holds for minimising harmonic maps in $\mcA\cap H^1(\Omega; \mathbb{S}^{n-1})$: 
$$\textrm{\bf ``Escaping" phenomenon for minimisers} \quad  \Longleftrightarrow \quad \textrm{\bf Smoothness for minimisers}.$$

We illustrate the above points on a natural and useful example.
\begin{example}
\label{examp}
Let $\Omega=B^m$ be the unit ball in $\R^m$, $m\geq 2$, $n\geq m+1$ and boundary data $u_{bd}(x)=(\frac{x}{|x|}, 0_{\R^{n-m}})$ for every $x\in \partial \Omega= \bS^{m-1}$. It is known (see J\"ager and Kaul \cite{JagKaul}) that the equator map $$u(x)=(\frac{x}{|x|}, 0_{\R^{n-m}}) \quad \textrm{ for every } \,  x\in B^{m}$$ is a minimising $\mathbb{S}^{n-1}$-valued harmonic map with the boundary data $u_{bd}$ {\bf if and only if} $m\geq 7$. \footnote{However, it is a minimising map among all $\Sphere^{m-1}$-valued maps for $m \geq 3$ (see Brezis, Coron and Lieb \cite{BrezisCoronLieb} and Lin \cite{Lin-CR87}), where we identify $\Sphere^{m-1}=\Sphere^{m-1}\times \{0_{\R^{n-m}}\}$.} 

Let us now explain how our results recover the above statement.
We start by illustrating {\bf ii)} of the above discussion: the stability of the equator map (with respect to $\Sphere^{n-1}$-valued maps) is equivalent to the validity of the following inequality
(see \eqref{numar_nou})
\[
\int_{B^m} |\nabla \varphi|^2\,dx \geq (m-1)\int_{B^m} \frac{|\varphi|^2}{|x|^2}\,dx \text{ for all } \varphi \in H^1_0(B^m)
\]
(because $|\nabla u|^2=(m-1)/|x|^2$).
In view of the sharp Hardy inequality in $m$ dimensions, this is true if and only if
\[
\frac{(m-2)^2}{4} \geq m - 1 \  \Longleftrightarrow \  m \geq 7.
\]
Invoking Proposition \ref{Lem:HPInPlane+Stab=>Min}, we can recover the aforementioned result of J\"ager and Kaul \cite{JagKaul}, i.e., $u$ is a ``non-escaping" stable critical point  and hence the unique minimising harmonic map in $\mcA$ if  and only if $m\geq 7$. Therefore, the escaping phenomenon occurs in dimensions $2 \leq m \leq 6$ when $\mathbb{S}^{n-1}$-valued  minimising harmonic map are smooth (see {\bf iii)}), while the ``non-escaping" phenomenon occurs in dimensions $m \geq 7$ when the equator map is the unique minimising harmonic map (see {\bf i)}).

\end{example}

Inspired by Example \ref{examp}, we will prove that the ``non-escaping" phenomenon also occurs in the Ginzburg-Landau problem for every $\eps>0$ in dimensions $n>m \geq 7$ when the equator map is the boundary data
$$u_{bd}(x)=(x, 0_{\R^{n-m}}), \quad x\in \partial B^m.$$
In that situation, the unique minimiser is the ``non-escaping" radial critical point of $E_\eps$ given by
\be
\label{def_sol_equa}
u_\eps(x)=(h_\eps(|x|)\frac{x}{|x|}, 0_{\R^{n-m}}), \quad \textrm{ for all } x\in B^m,\ee
where the radial profile $h_\eps$ is the increasing positive solution of 
\be
\label{1}
\left\{\begin{array}{l}
-h''_\eps-\frac{m-1}{r}h'_\eps+\frac{m-1}{r^2}h_\eps=\frac{h_\eps}{\eps^2} W'(1-h_\eps^2) \quad \textrm{for every } r\in(0,1),\\
h_\eps(0)=0, h_\eps(1)=1
\end{array}
\right.
\ee
(see e.g. \cite{ODE_INSZ}).

\medskip

\begin{theorem}
\label{GL_equator}
Let $W\in C^1((-\infty,1],\R)$ be a potential satisfying \eqref{ass:W}. If $n>m\geq 7$, then for every $\eps>0$, $u_\eps$ given in \eqref{def_sol_equa} is the unique minimiser of $E_\eps$ under the boundary data $u_{bd}=(x, 0_{\R^{n-m}})$ for $x\in \partial B^m$.
\end{theorem}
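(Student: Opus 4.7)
The plan is to invoke Proposition~\ref{Lem:InPlane+Stab=>Min}. The candidate $u_\eps$ given by \eqref{def_sol_equa} is a critical point of $E_\eps$ in $\mcA$ whose range lies inside $V := Span\, u_{bd}(\partial B^m) = \R^m\times\{0_{\R^{n-m}}\}$, so $u_\eps$ is ``non-escaping''. Once stability of $u_\eps$ is shown, Proposition~\ref{Lem:InPlane+Stab=>Min} immediately yields that $u_\eps$ is the unique minimiser of $E_\eps$ in $\mcA$. Since $u_\eps$ takes values in $V$, the second variation at $u_\eps$ decouples as
\[
d^2 E_\eps[u_\eps](\Phi,\Phi) = Q_V(\Phi_V) + Q_{V^\perp}(\Phi_{V^\perp})
\]
under the orthogonal splitting $\Phi = \Phi_V + \Phi_{V^\perp}$, where $Q_V$ is the second variation of the $\R^m$-valued Ginzburg-Landau problem at $h_\eps(r)x/r$, and
\[
Q_{V^\perp}(\phi) = \int_{B^m} \Big[|\nabla \phi|^2 - \frac{W'(1-h_\eps^2)}{\eps^2}\,|\phi|^2\Big]\, dx, \qquad \phi \in H^1_0(B^m;\R^{n-m}).
\]

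For $Q_V$: Theorem~\ref{thm:main_inv} applied in the $\R^m$-valued setting with boundary data $u_{bd}(x)=x$ yields uniqueness of the minimiser (in this setting $V=\R^m$ makes condition (1) of that theorem vacuous), and the $SO(m)$-invariance of the problem forces this unique minimiser to be radial and hence equal to $h_\eps(r)x/r$; thus $Q_V \geq 0$.

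The heart of the proof is the nonnegativity of $Q_{V^\perp}$, which decouples into scalar perturbations and reduces to showing
\[
\int_{B^m} |\nabla \phi|^2 \,dx \geq \int_{B^m} \frac{W'(1-h_\eps^2)}{\eps^2}\phi^2 \,dx \qquad\forall\, \phi \in H^1_0(B^m;\R).
\]
Using \eqref{1} to rewrite $W'(1-h_\eps^2)/\eps^2 = -\Delta h_\eps/h_\eps + (m-1)/r^2$ on $\{h_\eps>0\}$ together with the Hardy-type substitution $\phi = h_\eps\, \psi$, a routine integration by parts converts this into the weighted Hardy inequality
\[
\int_{B^m} h_\eps^2\, |\nabla \psi|^2\, dx \geq (m-1) \int_{B^m} h_\eps^2\, \frac{\psi^2}{r^2}\, dx.
\]
A spherical-harmonics decomposition handles all angular modes $k \geq 1$ for free, since the $k$-th eigenvalue satisfies $\lambda_k \geq \lambda_1 = m-1$. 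The remaining (radial) mode requires a one-dimensional weighted Hardy inequality, which we establish via a supersolution argument: the function $\Psi(r) = r^{-(m-2)/2}$ satisfies
\[
-\big(h_\eps^2 r^{m-1}\Psi'\big)' \geq (m-1)\, h_\eps^2 r^{m-3}\, \Psi \quad \text{on } (0,1);
\]
a direct computation reduces this to $(m-2) h_\eps\, h_\eps'\, r \geq \big[(m-1) - (m-2)^2/4\big]\, h_\eps^2$, which follows from the monotonicity $h_\eps' \geq 0$ (built into the definition of the increasing solution of \eqref{1}) together with the algebraic inequality $(m-2)^2/4 \geq m-1$, valid precisely for $m \geq 7$. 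A standard ground-state transformation then delivers the weighted Hardy inequality, completing the proof.

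The main obstacle is the weighted Hardy inequality for the radial mode; the fact that the critical dimension $m\geq 7$ is detected by the algebraic threshold $(m-2)^2/4 \geq m-1$ parallels Example~\ref{examp}, reflecting the principle that $h_\eps$ plays for Ginzburg-Landau the role played by the constant $1$ in the hedgehog harmonic map.
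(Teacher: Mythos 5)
Your treatment of $Q_V$ contains a genuine error, though fortunately in a step that is not needed. Theorem~\ref{thm:main_inv} cannot be applied to the $\R^m$-valued problem with boundary data $u_{bd}(x)=x$ on $\bS^{m-1}$: its standing hypothesis \eqref{ass:bd} requires a direction $e$ with $u_{bd}\cdot e\geq 0$ a.e.\ on $\partial\Omega$, and for the identity boundary data the function $x\mapsto x\cdot e$ changes sign on $\bS^{m-1}$ for \emph{every} $e$, so no such direction exists. Uniqueness and radial symmetry of the minimiser of the $\R^m$-valued problem is a genuinely hard question which the paper never needs to address, and your route to it is invalid. The reason this does not sink the proof is that Proposition~\ref{Lem:InPlane+Stab=>Min} does \emph{not} ask for nonnegativity of the full second variation: its hypothesis is only stability in the single direction $e$, i.e.\ exactly your scalar inequality $\int_{B^m}\big[|\nabla\varphi|^2-\frac1{\eps^2}W'(1-h_\eps^2)\varphi^2\big]dx\geq 0$ for $\varphi\in H^1_0(B^m)$. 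This suffices because minimality is deduced not from the second variation but from the convexity-based lower bound \eqref{rel:endif3}, $E_\eps(u_\eps+v)-E_\eps(u_\eps)\geq F(v)$, and $F$ decouples componentwise into $n$ copies of that same scalar form; no tangential (``$Q_V$'') contribution, and in particular no $W''$ term, ever enters. Deleting your $Q_V$ paragraph therefore leaves a complete argument.

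The remaining part, which you correctly identify as the heart of the matter, coincides in substance with the paper's proof: the substitution $\phi=h_\eps\psi$ together with the ODE \eqref{1} (which gives $L_\eps h_\eps=-\frac{m-1}{r^2}h_\eps$) reduces scalar stability to the weighted Hardy inequality $\int_{B^m}h_\eps^2|\nabla\psi|^2\,dx\geq(m-1)\int_{B^m}h_\eps^2\frac{\psi^2}{r^2}\,dx$, and this is settled by the weight $r^{-(m-2)/2}$, the monotonicity $h_\eps'\geq 0$, and the threshold $\frac{(m-2)^2}{4}\geq m-1$ valid for $m\geq 7$. The paper performs the second (ground-state) substitution $w=\f g$ with $\f=|x|^{-(m-2)/2}$ directly on the ball, handling all angular modes at once; your spherical-harmonics splitting plus the one-dimensional supersolution argument is an equivalent repackaging, and your supersolution inequality is literally the paper's sign computation $\frac12\nabla(\f^2)\cdot\nabla(h_\eps^2)=2\f\f'h_\eps h_\eps'\leq 0$ combined with the same algebraic inequality. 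One technical point you gloss over and the paper makes explicit: both substitutions are singular at the origin (where $h_\eps$ vanishes and $\f$ blows up), so one must first reduce, by the fact that a point has zero $H^1$-capacity in $\R^m$, to test functions in $C_c^\infty(B^m\setminus\{0\})$ before integrating by parts.
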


\begin{remark}
i) Within the hypotheses of Theorem \ref{GL_equator}, a consequence of Theorems \ref{thm:main} and \ref{GL_equator}  is the non-existence of ``escaping" critical points of $E_\eps$
for every $\eps>0$ if $n>m\geq 7$.

ii) When $2\leq m\leq 6$ and $n>m$, the non-existence of ``escaping" critical points of $E_\eps$ still holds
provided that $\eps$ is large (in particular, if $E_\eps$ is strictly convex, then $u_\eps$ in \eqref{def_sol_equa} is the only critical point of $E_\eps$ with the boundary data $u_{bd}$). However, for small $\eps>0$, the minimisers of $E_\eps$ with the boundary data $u_{bd}$ will ``escape" from $\R^m\times\{0_{\R^{n-m}}\}$ 
(while the ``non-escaping" critical point $u_\eps$ in \eqref{def_sol_equa} becomes unstable). This is because any minimiser of $E_\eps$ converges in $H^1(B^m)$ as $\eps\to 0$ to a minimising harmonic map $u_*$ (see e.g. \cite{BBH}) and by Example \ref{examp} every minimising harmonic map $u_*$ escapes from the subspace $\R^m\times\{0_{\R^{n-m}}\}=Span \, u_{bd}(\partial \Omega)$.
\end{remark}

The paper is organized as follows. In Sections~\ref{sec:thm_main} and \ref{sec:thm_inv}, we prove Theorem~\ref{thm:main} and Theorems~\ref{thm:main_inv} and \ref{GL_equator}, respectively. 
In section~\ref{sec:GL} we investigate the radially symmetric Ginzburg-Landau problem in its simplest form ($B^2 \to \R^3$) and prove the uniqueness and symmetry of minimisers formulated in Theorem~\ref{cor:symmetry}. In Section~\ref{sec:HMP} we investigate the harmonic map problem and prove Theorem~\ref{prop:HMP}  and Theorem~\ref{prop:HMP_inv} -- the harmonic maps analogues of Theorem~\ref{thm:main} and Theorem~\ref{thm:main_inv}. Finally, in the Appendix~\ref{sec:d0} we discuss the ``escape" phenomenon for minimisers of $E$ and $E_\eps$ when the boundary data $u_{bd}$ carries a zero topological degree on $\partial \Omega$.

\section{Uniqueness of critical points of $E_\eps$. Proof of Theorem~\ref{thm:main}.}
\label{sec:thm_main}

Let us fix $\eps >0$ and $u_{bd}\in H^{1/2}\cap L^\infty(\partial \Omega; \R^n)$. It is clear that $\mcA\cap L^\infty(\Omega; \R^n)\neq \emptyset$ as it contains the harmonic extension of $u_{bd}$ inside $\Omega$. 
Assume that $u_\eps \in H^1\cap L^\infty(\Omega; \R^n)$ is a critical point of the energy $E_\eps$ in the set $\mcA$; then $u_\eps$ satisfies the Euler-Lagrange equations of $E_\eps$ in the weak sense, i.e.
\be
\label{E-L}
\int_{\Omega} \Big[\nabla u_\eps\cdot\nabla v-\frac{1}{\eps^2}W'(1-|u_\eps|^2)u_\eps v\Big]\,dx=0 \qquad \text{ for all } v \in H^1_0(\Omega;\R^n),
\ee
$$-\Delta u_\eps=\frac1{\eps^2} u_\eps \, W'(1-|u_\eps|^2)\quad \textrm{distributionally in } \, \Omega.$$
Note that \eqref{E-L} makes sense since $W'(1-|u_\eps|^2)\in L^\infty(\Omega)$ (because $u_\eps\in L^\infty(\Omega)$ and $W\in C^1((-\infty, 1])$).
Moreover, standard interior elliptic regularity implies $u_\eps \in C^1(\Omega)$. 

\begin{proof}[Proof of Theorem~\ref{thm:main}] For simplicity of the presentation we drop dependence on $\eps$ in the indices below.
Also due to the invariance of the energy under the transformation $u(x)\mapsto Ru(x)$ for any $R\in O(n)$, we can assume that 
\be
\label{en}
e:=e_n=(0,\dots, 0, 1)\in \R^n,
\ee
and work with a boundary data satisfying \eqref{ass:bd} in $e_n$ direction.

The proof will be done in three steps. In the first step we consider the difference between the energies of a critical point $u$ and an arbitrary competitor $u+v$ and show that this difference is controlled from below by some quadratic energy functional $F(v)$. In the second step, we employ the positivity of $u \cdot e_n$ in \eqref{ass:Phi} and apply the Hardy decomposition method in order to show that $F(v)\geq 0$, which proves in particular that $u$ is a minimiser of $E$. Finally, in the third step, we characterise the situations when this difference is zero and thus completely classify the possible cases of non-uniqueness of minimisers. 

\bigskip
\par\noindent{\it Step 1: Lower bound for energy difference.} 
For any  $v\in H^1_0(\Omega;\R^n)$ we have
\begin{align*}
E(u+v)-E(u)
	&=\int_\Omega\Big[ \nabla u\cdot\nabla v +\frac{1}{2}|\nabla v|^2\Big]\,dx\\
		&\qquad +\frac{1}{2\eps^2}\int_\Omega \Big[W(1-|u+v|^2)-W(1-|u|^2)\Big]\,dx.
\end{align*}
Using the strict convexity of $W$ (see \eqref{ass:W}), we have
\[
W(1-|u+v|^2)-W(1-|u|^2)\ge -W'(1-|u|^2)(|u+v|^2-|u|^2),
\]
where equality holds if and only if $|u+v|=|u|$. The last two relations imply that
\begin{align}
E(u+v)-E(u)
	&\ge \int_\Omega \Big[\nabla u\cdot\nabla v-\frac{1}{\eps^2} W'(1-|u|^2)u\cdot v\Big]\,dx\nonumber\\
		&\qquad\qquad +\int_\Omega \Big[ \frac{1}{2} |\nabla v|^2-\frac{1}{2\eps^2}W'(1-|u|^2)|v|^2\Big]dx,\label{rel:endif1}
\end{align}
with equality holding if and only if $|u(x)+v(x)|=|u(x)|$ a.e. $x\in\Omega$.
Moreover, by \eqref{E-L}, we obtain
\bea\label{rel:endif3}
E(u+v)-E(u) \ge  \int_\Omega \Big[ \frac 12 |\nabla v|^2-\frac{1}{2\eps^2} W'(1-|u|^2) |v|^2\Big]\,dx =: F(v)
\eea
for all  $v\in H^1_0(\Omega;\R^n)$.

\bigskip
\par\noindent{\it Step 2: Minimality of $u$ and the Hardy decomposition method.} Now we want to show the non-negativity of the  quadratic energy $F(v)$ defined in \eqref{rel:endif3}. Recall that $\Phi:=u \cdot e_n$ belongs to $C^1(\Omega)$ and $\Phi>0$ in $\Omega$ (by \eqref{ass:Phi}) and satisfies weakly the Euler-Lagrange equation in \eqref{E-L}:
\be\label{eq:phi}
-\Delta \Phi -\frac{1}{\eps^2}W'(1-|u|^2) \Phi =0 \quad \textrm{in } \, \Omega.
\ee
Now, for any smooth map $v \in C_c^\infty (\Omega; \R^n)$ with compact support in $\Omega$, we can define $\psi= \frac{v}{\Phi} \in H_0^1(\Omega; \R^n)\cap L^\infty(\Omega; \R^n)$ and we claim (by Lemma A.1. in \cite{INSZ3}):
\be
\label{desired30}
F(v)
	=\frac 12 \int_\Omega \Phi^2 |\nabla \psi|^2\,dx = \frac{1}{2} \int_\Omega \sum_{j= 1}^n \Big|\nabla v_j - v_j\frac{\nabla \Phi}{\Phi}\Big|^2\,dx\geq 0 \quad  \text{ for all } v \in C_c^\infty (\Omega; \R^n).
\ee
Indeed, for $1 \leq j \leq n$, integration by parts implies
\begin{align*}
\int_\Omega \frac{1}{\eps^2}W'(1-|u|^2) v_j^2\,dx
	&= \int_\Omega \bigg[\frac{1}{\eps^2}W'(1-|u|^2) \Phi\bigg] \cdot \bigg[\Phi\,\psi_j^2\bigg]\,dx\\
	&\stackrel{\eqref{eq:phi}}{=} \int_\Omega \nabla \Phi \cdot \nabla (\underbrace{\Phi\,\psi_j^2}_{=v_j \psi_j})\,dx= \int_\Omega \Big[-\Phi^2 |\nabla \psi_j|^2 + |\nabla v_j|^2\Big]\,dx
\end{align*}
yielding the desired expression \eqref{desired30}. By density of $C_c^\infty (\Omega; \R^n)$ in $H^1_0(\Omega;\RR^n)$ and the continuity of $F(\cdot)$ in strong $H^1$ topology (as $W'(1-|u|^2)\in L^\infty(\Omega)$), we have by \eqref{rel:endif3} and \eqref{desired30}
that for all $v \in H^1_0(\Omega;\RR^n)$ 
$$F(v)\geq \frac 12 \int_\omega  \sum_{j= 1}^n \Big|\nabla v_j - v_j\frac{\nabla \Phi}{\Phi}\Big|^2 \,dx\geq 0 \quad \textrm{ for any compact } \omega\subset \Omega$$
(recall that $(\nabla \Phi)/\Phi\in L^\infty(\omega)$ for every compact set $\omega\subset \Omega$). 
As a direct consequence of \eqref{rel:endif3}, we deduce that $u$ is a minimiser of $E$ over $\mcA$.
In order to determine when $F(v)=0$, we note that for $v\in H^1_0(\Omega)$ and  any open set $\omega\subset \Omega$ compactly supported in $\Omega$:
$$
0=F(v)\geq \frac 12 \int_\omega \Phi^2 |\nabla \big(\frac{v}{\Phi} \big)|^2\,dx\ge 0.$$
Thus we have $v(x) = \lambda_\omega \Phi(x)$, for a.e. $x\in\omega$ and some constant vector $\lambda_\omega \in \RR^n$. Since $\omega$ is arbitrary we have that there exists a $\lambda\in \RR^n$ such that $v(x) = \lambda \Phi(x)$, for a.e. $x\in\Omega$.

\bigskip
\par\noindent{\it Step 3: Characterisation of non-uniqueness of minimisers.} We now take another minimiser $w$ of $E$ over $\mcA$ and denote  $v:=w-u\in H^1_0$.
 From Steps 1 and 2 we know that $E(w)=E(v+u)=E( u)$ implies
 \begin{itemize}
 \item there is a pointwise equality $|v+u|=| u|$ a.e. in $\Omega$, 
 \item $v=\lambda \Phi$ for some $\lambda \in \R^n$ and the scalar function $\Phi=u \cdot e_n$.
\end{itemize}
\medskip 

\noindent {\it Situation 1. $u_{bd}(x_0)\cdot e_n>0$ at some Lebesgue point $x_0 \in \partial\Omega$ of $u_{bd}$}. We want to show uniqueness of minimisers, i.e., $\lambda=0$ meaning that $v=0$ in $\Omega$. Assume by contradiction that $\lambda_j\neq 0$ for some $1\leq j\leq n$. Then $\Phi=\frac{v_j}{\lambda_j}\in H^1_0(\Omega)$. Since the $n$-th component $u_{bd, n}$ of $u_{bd}$ is the $H^{1/2}$-trace of $\Phi$, we have $u_{bd, n}(x)=\frac{v_j(x)}{\lambda_j}=0$ for $\h^{m-1}$-a.e. $x\in \partial \Omega$. As $x_0$ is a Lebesgue point of $u_{bd}$ we would obtain $u_{bd, n}(x_0)=0$ which contradicts the assumption of Situation 1. Therefore, $\lambda=0$ and $u$ is the only minimiser of $E$ in $\mcA$. 

\medskip

\noindent {\it Situation 2. $u_{bd}(x_0)\cdot e_n=0$ for $\h^{m-1}$-a.e. $x\in \partial \Omega$}. As the energy $E$ is invariant under rotation, we may assume that 
$V=\R^k\times\{0_{\R^{n-k}}\}$ (for some $0\le k\le n-1$) is the smallest vector space containing the set $u_{bd} (\partial\Omega)$ defined by \eqref{def_image}. 
(In the case $n=1$, then $V=\{0\}$). Note that the map
\be
\label{transf_n}
\tilde u:=(u_1,\dots, u_k,0,\dots,0, \sqrt{u^2_{k+1}+\dots+u_n^2}) 
\ee
belongs to $\mcA$ and does not increase the energy, i.e., $E(u)\geq E(\tilde u)$. Hence, as $u$ is a minimiser of $E$ on $\mcA$, $\tilde u$ is also a minimiser. Therefore, up to interchanging $u$ and $\tilde u$, we may assume 
$$
\left\{\begin{array}{l}
u_{k+1}=\dots=u_{n-1}\equiv 0 \textrm{ in }\Omega\\
u_n=\Phi\stackrel{\eqref{ass:Phi}}{>}0\textrm{ in }\Omega.
\end{array}\right.
$$
The relation $|v+ u|=|u|$ a.e. in $\Omega$ implies $2u\cdot v+|v|^2=0$ a.e. in $\Omega$. Since $v=\lambda\Phi$ and $\Phi>0$ in $\Omega$ we obtain 
\be
\label{numarul}
2\lambda\cdot  u+\Phi |\lambda|^2=0 \hbox{ a.e. in } \Omega.
\ee
In the case $n=1$, then $\lambda=\lambda_n$, $u=u_n=\Phi$ and $(\lambda+1)^2=1$ yielding $w=(\lambda+1)u=\pm u$. If $n\geq 2$, we argue as follows: the $H^1(\Omega)$-functions $2\lambda\cdot  u$ and $-\Phi |\lambda|^2$ coincide, therefore they have the same $H^{1/2}$-trace on $\partial \Omega$. As our assumption states that $\Phi$ has vanishing trace on $\partial\Omega$, we deduce that 
$\lambda\cdot u_{bd}(x)=0$ for $\h^{m-1}$-a.e. $x\in \partial\Omega$, in particular, for every Lebesgue point $x$ of $u_{bd}$ on $\partial \Omega$. Since  $V=\R^k\times\{0_{\R^{n-k}}\}$ is the smallest vector space containing $u_{bd}(\partial\Omega)$, it means that there are $k$ Lebesgue points $x^1, \dots, x^k\in \partial \Omega$ such that $\{u_{bd}(x^j)\}_j$ is a basis of $V$ and $\lambda\cdot u_{bd}(x^j)=0$ for every $1\leq j\leq k$. It follows that $\lambda_1=\lambda_2=\dots=\lambda_k=0$ and therefore, recalling that $u_{k+1}=\dots=u_{n-1}\equiv 0 \textrm{ in }\Omega$, we have
by \eqref{numarul}:
\[
2\lambda_n\Phi+(\lambda_{k+1}^2+\dots+\lambda_n^2)\Phi=0\textrm{ a.e. in }\Omega.
\]
As $\Phi>0$ in $\Omega$, we obtain
\[
\lambda_{k+1}^2+\dots+\lambda_{n-1}^2+(\lambda_{n} +1)^2=1,
\]
hence we can find $R\in O(n)$ such that $R x= x$ for all $x \in V$ and $$Re_n=(0,\dots,0,\lambda_{k+1},\dots,\lambda_{n-1}, \lambda_n+1).$$ This implies $w=u+v=u+\lambda \Phi=Ru$ as required. The converse statement is obvious: if $u$ is a minimiser of $E$ over $\mcA$ and $R\in O(n)$ is a transformation fixing all points of $V$, then $Ru$ is also a minimiser of $E$ over $\mcA$ (because $E$ is invariant under isometries). 
\end{proof}

\section{Uniqueness of minimisers of $E_\eps$. Proof of Theorems~\ref{thm:main_inv} and \ref{GL_equator}}
\label{sec:thm_inv}

In this section we prove the uniqueness result of Theorem~\ref{thm:main_inv}.

\begin{proof}[Proof of Theorem~\ref{thm:main_inv}] For simplicity of the presentation we drop dependence on $\eps$ in the indices below. Also, by rotation invariance of the energy, we may assume that $e=e_n$ (as in \eqref{en}).
We divide the proof in several steps:

\medskip
\noindent{\it Step 1: Existence of a minimiser of $E$ over $\mcA$}. We apply the direct method in the calculus of variation: if $(u_k)_k$ is a minimising sequence for $E$ in $\mcA$, then $(\nabla u_k)_k$ is bounded in $L^2(\Omega)$. As $u_k=u_{bd}$ on $\partial \Omega$, by the Poincar\'e-Wirtinger inequality, we deduce that $(u_k)_k$ is bounded in $H^1(\Omega)$ and we conclude by using lower semi-continuity of $E$ with respect to the weak $H^1$-topology (as $W$ is a nonnegative continuous function).

\medskip 
 
 \noindent{\it Step 2: Any minimiser $u$ of $E$ over $\mcA$ belongs to $L^\infty(\Omega)$, namely}
\begin{equation}
|u| \leq \max\{1, \sup_{\partial\Omega} |u_{bd}| \}=: \bar \rho \quad \text{ a.e. in } \Omega.
	\label{Eq:|u|Bnd}
\end{equation}
Indeed, we start by defining the following functions in $\Omega$: $\rho=|u|\in H^1(\Omega; \R_+)$, 
$$
v=\begin{cases} u &\textrm{ if $\bar \rho\geq \rho$},\\
\frac{\bar \rho}{\rho} u &\textrm{ if $\bar \rho< \rho$} 
\end{cases}
$$
and $\eta:=|v|=\min(\rho, \bar \rho)$. Then obviously $|\nabla v|=|\nabla u|$ in the set $\{\bar \rho \geq \rho\}$. In the complementary set  $\tilde \Omega:=\{\bar \rho < \rho\}$, we claim that $|\nabla v|\leq |\nabla u|$ $\h^{m-1}$-a.e.; for that, we set $\psi:=\frac{u}{\rho}\in H^1(\tilde \Omega)$ (because $\rho > \bar \rho\geq 1$ inside $\tilde \Omega$). Starting from $u=\rho \psi$ in $\tilde \Omega$, we deduce that $\nabla u_j= \psi_j \nabla \rho + \rho \nabla \psi_j$ for every $1\leq j\leq n$. Since $\sum_{j=1}^n \psi_j^2=1$ in $\tilde \Omega$, we have
\begin{align*}
|\nabla u|^2& = \sum_{j=1}^n |\nabla u_j|^2 = \sum_{j=1}^n |\nabla \rho \psi_j + \rho \nabla \psi_j|^2  \\ &=\sum_{j=1}^n |\nabla \rho|^2 |\psi_j|^2 + \rho^2 |\nabla \psi_j|^2 + \frac12 \nabla (\rho^2) \cdot \nabla (\psi_j^2) \\
&= |\nabla \rho|^2 + \rho^2 |\nabla \psi|^2\geq \bar \rho^2 |\nabla \psi|^2=|\nabla v|^2 \quad \textrm{ in } \tilde \Omega.
\end{align*}
Now, note that $W'(t) < 0$ for $t < 0$ because $W$ is strictly convex and has a minimum at $t = 0$. Therefore, one has that $W(1-\eta^2)\leq W(1-\rho^2)$ with equality if and only if $\eta=\rho$. In particular, $E(u) \geq E(v)$.
As $u$ is a minimum of $E$ over $\mcA$ and $v \in \mcA$, it follows that the above inequalities become equalities, i.e., $\eta=\rho$ a.e. in $\Omega$ which yields 
\eqref{Eq:|u|Bnd}.

\medskip

\noindent{\it Step 3: We show that any minimiser $u$ of $E$ over $\mcA$ satisfies the following dichotomy
\begin{equation}
\text{either $|u_n| > 0$ or $u_n \equiv 0$ in $\Omega$, }
	\label{Eq:Phi>or=0}
\end{equation}
where $u_n := u \cdot e_n$}. Indeed, as $u\in L^\infty$ by Step 2, interior elliptic regularity yields $u \in C^1(\Omega)$. Now, note that $$\hat u = (u_1, \dots,  u_{n-1}, |u_n|)$$ is also a minimiser of $E$ over $\mcA$ (here \eqref{ass:bd} is essential). In particular $|u_n|\in C^1(\Omega)$ and satisfies
\[
-\Delta |u_n| - \frac{1}{\eps^2}W'(1-|u|^2 ) |u_n| = 0 \text{ in } \Omega.
\]
As $|u_n| \geq 0$, we thus have by the strong maximum principle that either $|u_n| > 0$ or $|u_n| \equiv 0$ in $\Omega$. 

\medskip

\noindent{\it Step 4: Characterisation of (non-)uniqueness of minimisers}. Let $u$ be a minimiser of $E$ over $\mcA$. By Step 3, up to a sign change~\footnote{Due to \eqref{ass:bd}, the situation $u_n<0$ inside $\Omega$ can occur only if $u_{bd}\cdot e_n=0$ on $\partial \Omega$; in which case, the sign change $u_n\mapsto -u_n$ is compatible with the boundary data.}, we may assume that $u_n\geq 0$ in $\Omega$.

\smallskip

\noindent {\it Situation 1. $u_{bd}(x_0) \cdot e_n > 0$ at some Lebesgue point $x_0\in \partial \Omega$ of $u_{bd}$}. This assumption implies that $u_n$ cannot be identically $0$ in $\Omega$, therefore by Step 3, we must have $u_n > 0$ in $\Omega$ (i.e., \eqref{ass:Phi} holds). Then by point 1. in Theorem \ref{thm:main}, $u$ is the unique minimiser of $E$. 

\smallskip

\noindent {\it Situation 2. $u_{bd} \cdot e_n = 0$  in $\partial\Omega$}. As the functional $E$ is invariant under rotation, we may assume that $V = \RR^k \times \{0_{\R^{n-k}}\}$ (for some $0\leq k\leq n-1$) is the smallest vector subspace of $\RR^{n-1} \times \{0\}$ which contains $u_{bd}(\partial\Omega)$. (If $n=1$, then $V=\{0\}$). Using the transformation \eqref{transf_n} as in Step 3 of the proof of Theorem~\ref{thm:main}, we can assume without loss of generality that the given minimiser $u$ of $E$ satisfies $u_{k + 1} \equiv \ldots \equiv u_{n-1} \equiv 0$ and $u_n\geq 0$ in $\Omega$. In particular, we have $u \in \mcA_{res}$, and so $u$ is also a minimiser of $E$ in $\mcA_{res}$. We distinguish two cases:

\smallskip

\noindent {\it Case A: $u(\Omega) \not\subset V$} (i.e., the point 2. of Theorem \ref{thm:main_inv} holds) 
implies non-uniqueness. Note that $u_n > 0$ in $\Omega$ (by \eqref{Eq:Phi>or=0}). It is clear that $(u_1, \ldots, u_k, u_{k+1}=0, \ldots, u_{n-1} = 0, -u_n)$ is also a minimiser of $E$ in $\mcA$ (and $\mcA_{res}$), i.e. minimisers of $E$ in $\mcA$  (resp. $\mcA_{res}$) are not unique. Furthermore,  by Theorem \ref{thm:main}, all minimisers of $E$ in $\mcA$ are of the form $Ru$ for some $R \in O(n)$ such that $Rx = x$ for all $x \in V$.

\smallskip

\noindent {\it Case B: $Span(u(\Omega)) = V$} (i.e., the point 2. of Theorem \ref{thm:main_inv} does not hold) implies uniqueness. Note that every other minimiser $v$ of $E$ over $\mcA_{res}$ 
also satisfies $Span(v(\Omega)) = V$ (because otherwise, $Span(v(\Omega)) \not\subset V$ for some minimiser $v$ and then Case A would imply that $u=Rv$ for some isometry $R$ fixing the space $V$ which contradicts $Span(u(\Omega)) = V$). In other words, every minimiser $u$ of $E$ over $\mcA_{res}$ has the form 
\begin{equation}\label{eq:minform}
u=(u_1, \dots, u_k, 0, \dots, 0) \quad \textrm{ in } \quad \Omega.
\end{equation}
Then we want to show that $u$ is the unique minimiser of $E$ in $\mcA$. In the case $V=\{0\}$ (in particular, for $n=1$), then clearly $0$ is the unique minimiser of $E$. Therefore, in the sequel, we may assume that $V\neq \{0\}$, in particular, $n\geq 2$ and $k\geq 1$. For that, note first that as $u$ is a minimiser of $E$ in $A_{res}$, we have by our assumption that $u_n \equiv 0$ in $\Omega$, so we cannot directly apply Theorem \ref{thm:main}. We will argue by contradiction: assume that $w = u + v$ is a different minimiser of $E$ in $\mcA$ where $0 \not\equiv v \in H^1_0(\Omega, \RR^n)$. One can easily check that in this case, $u\neq w$ implies that the map  $\tilde w$ obtained out of $w$ as in \eqref{transf_n} satisfies $\tilde w\neq u$ and $\tilde w$ is a minimiser of $E$ over $\mcA_{res}$. Therefore, by our assumption, without loss of generality, we assume that the components of index $k+1,\dots, n$ of $w$ (and hence of $v$) are zero. We will construct a minimiser $\xi$ of $E$ on $\mcA_{res}$ that has a positive component along $e_n$ which contradicts \eqref{eq:minform}. 

\smallskip

\noindent {\it Construction of the minimiser $\xi$}. Let $u,w$ and $v=w-u$ be as above. Recall that by \eqref{rel:endif3} in Step 1 of the proof of the Theorem~\ref{thm:main}, we have:
\begin{equation}
0 = E(u + v) - E(u) \geq \int_\Omega \Big[\frac{1}{2}|\nabla v|^2 - \frac{1}{2\eps^2} W'(1-|u|^2 )|v|^2\Big]\,dx=:F(v),
	\label{Eq:EqualEner}
\end{equation}
with equality if and only if $|u+v|=|u|$ a.e. in $\Omega$.
Note that we cannot use Hardy decomposition trick anymore to show the non-negativity of $F(v)$ as any component $u_1, \dots , u_k$ of $u$ could change
 sign. However, since $u$ is a minimiser of $E$ in $\mcA_{res}$, we have for any scalar test function $\varphi \in H^1_0(\Omega)$ that \footnote{Although $W$ is only $C^1$, one can easily check that $E(u + t\varphi e_n)$ is twice differentiable with respect to $t$ at $t = 0$ because $u\cdot e_n=0$ in $\Omega$.}
\be
\label{34}
0 \leq \frac{d^2}{dt^2}\Big|_{t = 0} E(u + t\varphi e_n) = \int_{\Omega} \big[|\nabla \varphi|^2 - \frac{1}{\eps^2} W'(1-|u|^2 ) \varphi^2\big]\,dx.
\ee
Taking $\varphi=v_j$ for $1 \leq j\leq n$ and summing over  all $j$ we obtain 
$$
 0 \stackrel{\eqref{Eq:EqualEner}}{\geq}  \int_\Omega \Big[\frac{1}{2}|\nabla v|^2 - \frac{1}{2\eps^2} W'(1-|u|^2 )|v|^2\Big]\,dx \stackrel{\eqref{34}}{\geq} 0.
$$
So equality holds in \eqref{Eq:EqualEner} yielding
\[
|u|=|u + v| = |w| \text{ in } \Omega.
\]
We are now ready to construct a competitor. We define
\[
\xi =  u + \frac{1}{2} v + \frac{1}{2}|v|e_n \in \mcA_{res}
\]
and note that $\xi_n=\frac12|v|>0$, i.e., $\xi_n \not\equiv 0$ (because $u\cdot e_n=v\cdot e_n=0$) yielding
\[
|\xi|^2 = | u+\frac{1}2 v|^2 + \frac14|v|^2=\frac12(|u|^2+| u + v|^2 ) = | u|^2 =|w|^2 \text{ in }  \Omega,
\]
\[
|\nabla \xi|^2 = |\nabla( u + \frac{1}{2}v)|^2 + \frac{1}{4}|\nabla |v||^2 \leq |\nabla(u + \frac{1}{2}v)|^2 + \frac{1}{4}|\nabla v|^2 = \frac{1}{2}\left( |\nabla u|^2 + |\nabla w|^2\right).
\]
It follows that
\[
E(\xi) \leq \frac{1}{2} \left(E(u) + E(w) \right) = E(w) = E(u),
\]
and so $\xi$ is also a minimiser of $E$ in $\mcA_{res}$. But since $\xi \cdot e_n \not\equiv0$, this is a contradiction with our preliminary observation that the range on $\Omega$ of all minimisers of $E$ over $\mcA_{res}$ stays inside $V$. This completes the proof. 
\end{proof}

\begin{remark} \label{rem:nonescape}
In the context of Theorem \ref{thm:main_inv}, if points $1$ and $2$ hold then the minimiser $\tilde u_\eps$ of $E_\eps$ over $\mcA_{res}$ is unique up to the reflection $\cal I$ mapping $\tilde u_\eps\cdot e\mapsto -\tilde u_\eps\cdot e$ w.r.t. $V$ inside $Span(V \cup
\{e\})$. Indeed, up to a rotation, we may assume that $e=e_n$ (as in \eqref{en}) and $V=\R^k\times\{0_{\R^{n-k}}\}$ for some $0\le k\le n-1$
and by \eqref{transf_n} and \eqref{Eq:Phi>or=0} we may assume that $\tilde u_\eps\cdot e_n> 0$ in $\Omega$; then point 2. in Theorem \ref{thm:main} yields the conclusion.   
\end{remark}

Using Theorem~\ref{thm:main_inv} we can prove the following result, which is of independent interest:
``non-escaping" stable critical points of $E_\eps$ are minimisers and moreover, they are unique. 

\begin{prop}\label{Lem:InPlane+Stab=>Min}
Let $m, n \geq 1$, $\Omega\subset\R^m$ be a bounded domain with smooth boundary. We consider a potential $W\in C^1((-\infty,1],\R)$ satisfying \eqref{ass:W} and a boundary data $u_{bd}\in H^{{1}/{2}}\cap L^\infty( \partial\Omega; e^\perp)$ for a direction $e\in \bS^{n-1}$, where
$$e^\perp:=\{v\in \R^n\,:\, v\cdot e=0\}.$$ 
For any fixed $\eps > 0$, if $u_\eps$ is a critical point of $E_\eps$ in $\mcA$, $u_\eps \in L^\infty(\Omega; e^\perp)$ and  is stable in direction $e$, i.e.
\[
\frac{d^2}{dt^2}\big|_{t=0}E_\eps(u_\eps+t\f e)=\int_\Omega \Big[|\nabla \varphi|^2 - \frac{1}{\eps^2}W'(1-|u_\eps|^2 )\, \varphi^2\Big]\,dx \geq 0 \text{ for all } \varphi \in H_0^1(\Omega),
\]
then $u_\eps$ is a minimiser of $E_\eps$ in $\mcA$. Moreover, if $u_\eps(\Omega)\subset Span \, u_{bd}(\partial \Omega)$, then $u_\eps$ is the unique minimiser of $E_\eps$ in $\mcA$.
\end{prop}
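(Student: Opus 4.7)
The plan is to deduce Part 1 directly from a componentwise application of the stability inequality combined with the lower bound for the energy difference already exploited in the proof of Theorem \ref{thm:main}; Part 2 will then follow by invoking Theorem \ref{thm:main_inv}. I do not anticipate any substantial obstacle, since the main conceptual point is simply to recognise that the stability inequality for perturbations along the fixed direction $e$ is actually a scalar inequality that can be applied componentwise.

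First I would observe that the hypothesis $u_\eps(\Omega) \subset e^\perp$ makes the term $W''(1-|u_\eps|^2)\,(u_\eps\cdot\varphi e)^2$ in the second variation vanish, so that the stability assumption indeed reduces to the scalar Schr\"odinger-type inequality
\[
Q(\varphi) \;:=\; \int_\Omega\Big[|\nabla\varphi|^2 - \frac{1}{\eps^2}W'(1-|u_\eps|^2)\,\varphi^2\Big]\,dx \;\geq\; 0 \qquad\text{for all }\varphi\in H_0^1(\Omega).
\]
The key point is that the coefficient $W'(1-|u_\eps|^2)$ depends only on $|u_\eps|$ and not on any perturbation direction in $\R^n$. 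Repeating Step 1 of the proof of Theorem \ref{thm:main}, which requires only that $u_\eps$ is a critical point satisfying \eqref{E-L} and that $W$ is strictly convex, one obtains, for every $v\in H_0^1(\Omega;\R^n)$,
\[
E_\eps(u_\eps+v) - E_\eps(u_\eps) \;\geq\; \int_\Omega\Big[\tfrac{1}{2}|\nabla v|^2 - \tfrac{1}{2\eps^2}W'(1-|u_\eps|^2)|v|^2\Big]\,dx \;=\; \frac{1}{2}\sum_{j=1}^{n} Q(v_j) \;\geq\; 0,
\]
so $u_\eps$ is a minimiser of $E_\eps$ in $\mcA$.

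For Part 2, assume in addition that $u_\eps(\Omega) \subset V := \operatorname{Span}\,u_{bd}(\partial\Omega)$. Since $u_{bd}(\partial\Omega)\subset e^\perp$ one has $V\subset e^\perp$, so in particular $e\notin V$; consequently $u_\eps(\Omega)\subset V \subset \operatorname{Span}(V\cup\{e\})$, and thus $u_\eps\in\mcA_{res}$. The hypothesis $u_{bd}\cdot e\equiv 0$ is precisely condition~1 of Theorem~\ref{thm:main_inv}. If $u_\eps$ were not the unique minimiser of $E_\eps$ in $\mcA$, the concluding assertion of that theorem would furnish a distinct minimiser of the form $Ru_\eps$ with $R\in O(n)$ fixing every point of $V$. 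But $u_\eps(\Omega)\subset V$ forces $Ru_\eps = u_\eps$ for every such $R$, a contradiction; hence $u_\eps$ is the unique minimiser of $E_\eps$ in $\mcA$.
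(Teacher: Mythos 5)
Your proposal is correct and takes essentially the same approach as the paper: minimality is obtained exactly as in the paper's proof, by combining the lower bound \eqref{rel:endif3} from Step 1 of the proof of Theorem \ref{thm:main} with the componentwise application ($\varphi = v_j$, summed over $j$) of the scalar stability inequality, and uniqueness is reduced to Theorem \ref{thm:main_inv}. The only cosmetic difference is in the last step: the paper concludes by observing that the ``non-escaping'' hypothesis makes point 2 of Theorem \ref{thm:main_inv} fail, whereas you invoke the theorem's final ``Moreover'' clause together with the observation that an orthogonal map fixing $V$ acts trivially on a minimiser with values in $V$ --- the same rigidity statement, used contrapositively.
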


\begin{proof}[Proof of Proposition \ref{Lem:InPlane+Stab=>Min}]
Indeed, for any $v \in H_0^1(\Omega, \R^n)$ we have 
$E_\eps(u_\eps+v)-E_\eps(u_\eps)\geq F(v)$ thanks to \eqref{rel:endif3}. Employing the stability assumption in the direction $e$ and taking $\varphi=v_j$ for $1\leq j\leq n$ we obtain $F(v) \geq 0$ for every $v \in H_0^1(\Omega, \R^n)$, yielding that $u_\eps$ is a minimiser of $E_\eps$ over $\mcA$. (In the case $n=1$, then $e^\perp=\{0\}$ so that  
$u_{bd}=0$ on $\partial \Omega$ as well as $u_\eps=0$ in $\Omega$).
If $u_\eps(\Omega)\subset Span \, u_{bd}(\partial \Omega)$, then the uniqueness follows via Theorem \ref{thm:main_inv} (as the ``non-escaping" case holds, i.e., point $2$ fails).
\end{proof}

\begin{remark}
\label{rem:convex} For a given Lipschitz bounded domain $\Omega\subset \R^m$, under the assumption \eqref{ass:W} for the potential $W\in C^1((-\infty, 1])$, there exists $\eps_0>0$ (depending only on $W'(1)$ and $\Omega$) such that $E_\eps$ is strictly convex over $\mcA$ for every $\eps\geq \eps_0$ and every boundary data $u_{bd}\in H^{1/2}\cap L^\infty(\partial \Omega, \R^n)$; in this situation ($\eps\geq \eps_0$), there exists a unique critical point \footnote{This critical point is the global minimiser of $E_\eps$ over $\mcA$ as a minimiser always exists (see Step 1 in the proof of Theorem \ref{thm:main_inv}).} of $E_\eps$ over $\mcA$. Indeed, following the ideas in \cite{BBH_book} (see Theorem VIII.7), if $\lambda=\lambda_1(\Omega)$ is the first eigenvalue of $(-\Delta)$ on $\Omega$ (with zero Dirichlet data), then the functional $$G:v\mapsto \int_{\Omega} |\nabla v|^2-\lambda |v|^2\, dx$$ is convex \footnote{For $v, w\in H^1_0(\Omega)$ and 
$\alpha\in (0,1)$, we have $\alpha G(v)+(1-\alpha) G(w)-G(\alpha v+(1-\alpha)w)=\alpha(1-\alpha)G(v-w)\geq 0$ by the Poincar\'e inequality. } on $H^1_0(\Omega)$
which entails that $G$ is convex over $\mcA$ (as we can write $\mcA=u_0+H^1_0(\Omega, \R^n)$ for the harmonic extension $u_0\in \mcA$ of $u_{bd}$ inside $\Omega$). Moreover, by \eqref{ass:W} we deduce that $t\in [0, \infty) \mapsto \frac1{\eps^2}W(1-t)+\lambda t$ is strictly convex and {non-decreasing} for $\eps\geq \eps_0=
({|W'(1)|}/{\lambda})^{1/2}$, so that the functional 
$$u\in \mcA\mapsto \int_{\Omega}   \frac1{\eps^2}W(1-|u|^2)+\lambda |u|^2\, dx$$
is strictly convex.
\end{remark}

As a consequence of Proposition \ref{Lem:InPlane+Stab=>Min}, we prove Theorem \ref{GL_equator}:

\begin{proof}[Proof of Theorem \ref{GL_equator}]
By Proposition \ref{Lem:InPlane+Stab=>Min}, it is enough to prove the stability of $u_\eps$ given in \eqref{def_sol_equa}, i.e.,
$$F_\eps(v)=\frac12\int_{B^m} |\nabla v|^2-\frac1{\eps^2} W'(1-h_\eps^2)v^2\, dx\geq 0, \quad \forall v\in H^1_0({B^m}).$$
As a point has zero $H^1$ capacity in $\R^m$, by a standard density argument, it is enough to prove the above inequality for $v\in C^\infty_c({B^m}\setminus \{0\})$.
We consider the operator
$$L_\eps:=\nabla_{L^2}F_\eps=-\Delta-\frac1{\eps^2} W'(1-h_\eps^2).$$
Using the decomposition $v=h_\eps w$ for $v\in C^\infty_c({B^m}\setminus \{0\})$, we have (see e.g. \cite[Lemma~A.1]{INSZ3}):
\begin{align*}
2F_\eps(v)=\int_{B^m} L_\eps v\cdot v\, dx&=\int_{B^m} w^2 L_\eps h_\eps\cdot h_\eps\, dx+\int_{B^m} h_\eps^2 |\nabla w|^2\, dx\\
& =\int_{B^m} h_\eps^2 \bigg( |\nabla w|^2-\frac{m-1}{r^2}w^2\bigg)\, dx=:G_\eps(w),
\end{align*}
because \eqref{1} yields $L_\eps h_\eps\cdot h_\eps=-\frac{m-1}{r^2} h_\eps^2$ in ${B^m}$. To prove that $G_\eps(w)\geq 0$ for every $w\in C^\infty_c({B^m}\setminus \{0\})$, we use the decomposition $w=\f g$ with $\f=|x|^{-\frac{m-2}{2}}$ being the first eigenfunction of the Hardy's operator $-\Delta-\frac{(m-2)^2}{4|x|^2}$ in $\R^m\setminus\{0\}$ and $g\in C^\infty_c({B^m}\setminus \{0\})$. We compute
\begin{align*}
|\nabla w|^2=|\nabla \f|^2 g^2+|\nabla g|^2 \f^2+\frac12 \nabla (\f^2)\cdot \nabla (g^2).
\end{align*}
As $\f^2$ is harmonic in ${B^m}\setminus\{0\}$ and $|\nabla \f|^2=\frac{(m-2)^2}{4|x|^2}\f^2$, integration by parts yields
\begin{align*}
G_\eps(w)&=\int_{B^m} h_\eps^2 \bigg( |\nabla g|^2 \f^2+\frac{(m-2)^2}{4r^2}\f^2g^2-\frac{m-1}{r^2}\f^2 g^2\bigg)\, dx-\frac12 \int_{{B^m}} \nabla (\f^2)\cdot \nabla (h_\eps^2)g^2\, dx\\
&\geq \int_{B^m} h_\eps^2 |\nabla g|^2 \f^2+\bigg(\frac{(m-2)^2}{4}-(m-1)\bigg)\int_{{B^m}}\frac{h_\eps^2}{r^2}\f^2g^2\, dx\geq 0,
\end{align*}
where we have used $m\geq 7$ and $\frac12\nabla (\f^2)\cdot \nabla (h_\eps^2)=2\f \f' h_\eps h'_\eps\leq 0$ in ${B^m}\setminus \{0\}$.
\end{proof}

\section{Radially symmetric problem. Proof of Theorem \ref{cor:symmetry}}
\label{sec:GL}

Theorem \ref{cor:symmetry} is an immediate consequence of the following two results.

\begin{proposition}\label{prop:symmetry}
Under the hypotheses of Theorem \ref{cor:symmetry}, we have for every $\eps>0$:
  \begin{enumerate}
  \item Any minimiser $u_\eps$ of the energy $E_\eps$ in the set $\mcA$ (with $n=3$) has the representation \eqref{rad-sol} where $f_\eps > 0$, $g_\eps \geq 0$ in $(0,1)$ and $(f_\eps, g_\eps)$ solves \eqref{syst:ode}-\eqref{syst:bc}.
 
 \item There is at most one solution $(f_\eps, g_\eps)$ of \eqref{syst:ode}-\eqref{syst:bc} satisfying $g_\eps > 0$ and there is exactly one solution $(\tilde f_\eps, \tilde g_\eps)$ of \eqref{syst:ode}-\eqref{syst:bc} satisfying $\tilde f_\eps > 0$, $\tilde g_\eps \equiv 0$. 
  
 \item  If the solution $(f_\eps, g_\eps>0)$ of \eqref{syst:ode}-\eqref{syst:bc} exists, then $E_\eps$ has exactly two minimisers that are given by $(f_\eps, \pm g_\eps)$ via \eqref{rad-sol}, while the critical point of $E_\eps$ corresponding to $(\tilde f_\eps, \tilde g_\eps\equiv0)$  via \eqref{rad-sol} is unstable.
\end{enumerate}
\end{proposition}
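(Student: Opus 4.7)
The plan is to derive all three claims from Theorems~\ref{thm:main} and \ref{thm:main_inv} and Proposition~\ref{Lem:InPlane+Stab=>Min}, combined with the rotational and reflection symmetries of the specific boundary data. Observe first that $u_{bd}\cdot e_3\equiv 0$ on $\partial\Omega$, so Situation~2 of Theorem~\ref{thm:main_inv} applies with $V=\mathrm{Span}\,u_{bd}(\partial\Omega)=\R^2\times\{0\}$: the set of minimisers of $E_\eps$ in $\mcA$ either consists of a single in-plane element or of two escaping elements related by the reflection $R_3:e_3\mapsto -e_3$, the only nontrivial element of $O(3)$ fixing $V$. To obtain the form \eqref{rad-sol}, I would introduce the continuous family of symmetries $T_\theta u(x):=R^{(3)}_\theta u\bigl(R^{(2)}_{-\theta/k}x\bigr)$, where $R^{(3)}_\theta$ acts as a rotation on the first two components of $\R^3$ and $R^{(2)}_\psi$ is a rotation of $\R^2$. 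A direct substitution shows $T_\theta u_{bd}=u_{bd}$ and $E_\eps\circ T_\theta=E_\eps$. Since $\theta\mapsto T_\theta u_\eps$ is continuous into the (at most two-element) minimiser set with $T_0 u_\eps=u_\eps$, we must have $T_\theta u_\eps=u_\eps$ for all $\theta$. Writing $u_\eps(r,0)=(a(r),b(r),c(r))$ this forces
\[
u_\eps(r\cos\varphi,r\sin\varphi)=\bigl(a\cos k\varphi-b\sin k\varphi,\,a\sin k\varphi+b\cos k\varphi,\,c(r)\bigr).
\]
To eliminate $b$ I invoke the discrete symmetry $S:u(x_1,x_2)\mapsto (u_1,-u_2,u_3)(x_1,-x_2)$, which also preserves $u_{bd}$ and $E_\eps$. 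Thus $Su_\eps\in\{u_\eps,R_3u_\eps\}$, and a short computation shows each alternative forces $b\equiv 0$, producing the representation \eqref{rad-sol} with $f_\eps=a$, $g_\eps=c$.

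For the sign conditions in Point~1, I would show $g_\eps$ does not change sign by replacing $g_\eps$ with $|g_\eps|$ (a minimiser since $E_\eps$ is even in $g_\eps$) and applying the strong maximum principle to the linear equation $-\Delta|u_\eps\cdot e_3|=\frac{1}{\eps^2}W'(1-|u_\eps|^2)|u_\eps\cdot e_3|$ exactly as in Step~3 of the proof of Theorem~\ref{thm:main_inv}; after choosing the sign convention one has $g_\eps\geq 0$. To prove $f_\eps>0$ on $(0,1)$, suppose $f_\eps(r_0)=0$ at some interior point; then $(|f_\eps|,g_\eps)$ yields an equal-energy minimiser whose interior $C^1$-regularity forces $f'_\eps(r_0)=0$, and Cauchy uniqueness for the linear ODE $f''+f'/r-[k^2/r^2-W'(1-f^2-g_\eps^2)/\eps^2]f=0$ (with coefficients regular at $r_0>0$) gives $f_\eps\equiv 0$, contradicting $f_\eps(1)=1$.

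For Point~2 (ODE uniqueness) the $g_\eps>0$ case is immediate from the symmetry framework: any solution $(f,g)$ of \eqref{syst:ode}--\eqref{syst:bc} with $g>0$ gives a critical point $u$ satisfying the positivity assumption \eqref{ass:Phi} in the direction $e_3$, so Theorem~\ref{thm:main} forces $u$ to be a minimiser and identifies all minimisers as $\{u,R_3u\}$; hence the solution with $g>0$ is unique. For the $g\equiv 0$ case, existence of $\tilde f_\eps>0$ follows by minimising the reduced 1D functional $\tilde E(f)=\pi\int_0^1[f'^2+k^2f^2/r^2+W(1-f^2)/\eps^2]r\,dr$ over $\{f\in H^1:f(1)=1\}$, passing to $|f|$ and invoking the strong maximum principle; the minimiser solves the reduced ODE, and its uniqueness among positive solutions is a Brezis--Oswald-type statement available here because strict convexity of $W$ makes $f\mapsto W'(1-f^2)/\eps^2-k^2/r^2$ strictly decreasing in $f>0$.

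Finally, Point~3 follows painlessly. Assuming $(f_\eps,g_\eps>0)$ exists, the associated $u_\eps$ is positive in the direction $e_3$, so Theorem~\ref{thm:main} immediately produces exactly two minimisers $\{u_\eps, R_3 u_\eps\}$, i.e.\ the maps built from $(f_\eps,\pm g_\eps)$ via \eqref{rad-sol}. For the instability of the critical point $\tilde u_\eps$ corresponding to $(\tilde f_\eps,0)$, note that $\tilde u_\eps$ has range in $V=e_3^\perp$ but is not a minimiser of $E_\eps$ over $\mcA$ (the minimisers escape to $\pm g_\eps\not\equiv 0$). Proposition~\ref{Lem:InPlane+Stab=>Min}, applied in the contrapositive with $e=e_3$, then forces $\tilde u_\eps$ to be unstable in the $e_3$-direction. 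I expect the main technical obstacle to be the uniqueness of positive solutions to the reduced ODE when $g\equiv 0$: the Brezis--Oswald-type monotonicity argument needs to be carefully verified at the endpoints $r=0,1$ where the equation is singular and the boundary conditions degenerate, and a careful reduction via the substitution $f=\tilde f_\eps\,\eta$ or a direct integration by parts identity is likely required.
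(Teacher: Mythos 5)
Your proposal is correct and its overall architecture is the same as the paper's: Theorem~\ref{thm:main_inv} (Situation~2, with $V=\R^2\times\{0_{\R}\}$) reduces the minimiser set to at most two elements related by the reflection $e_3\mapsto -e_3$; the $\bS^1$-equivariance argument then forces the form \eqref{rad-sol}; Theorem~\ref{thm:main} yields the $g_\eps>0$ uniqueness in Point~2 and the exactly-two-minimisers statement in Point~3; and instability of $(\tilde f_\eps,0)$ follows from the contrapositive of Proposition~\ref{Lem:InPlane+Stab=>Min} --- precisely the paper's route. You deviate in three local steps, each legitimately. First, to kill the angular component $\hat f_\eps$ (your $b$), you use the extra reflection symmetry $S$ and check that both alternatives $Su_\eps=u_\eps$ and $Su_\eps=\mathcal{I}u_\eps$ force $b\equiv 0$; the paper instead extracts $\hat f_\eps\equiv 0$ from the Euler--Lagrange system, citing \cite{INSZ_AnnIHP}, so your version is more self-contained. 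Second, for $f_\eps>0$ the paper first gets $f_\eps=|f_\eps|\geq 0$ from uniqueness of the minimiser and then applies the strong maximum principle to $-\Delta f_\eps+\frac{k^2}{r^2}f_\eps\geq 0$; your route (kink of $|f_\eps|$ versus interior $C^1$-regularity, then Cauchy uniqueness for the linearised ODE) also works, but note that non-vanishing alone only gives a constant sign --- you still need either $f_\eps=|f_\eps|$ or the trace condition $f_\eps(1)=1$ to conclude the sign is positive. Third, for existence and uniqueness of the in-plane profile $\tilde f_\eps$ the paper simply cites \cite{tang,farina-guedda,Hervex2,ODE_INSZ}, whereas you sketch a 1D minimisation plus a Brezis--Oswald argument; this is viable (the singular terms $k^2/r^2$ cancel in the monotone quotient, and the boundary terms at $r=1$ vanish since both solutions equal $1$ there), but, as you yourself flag, justifying the integration by parts near $r=0$ requires the asymptotics $f\sim c\,r^{|k|}$, and this is the one place where your write-up is a sketch rather than a proof --- it is exactly the content of the ODE references the paper invokes.
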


\begin{proposition}\label{prop:dicho}
Under the hypotheses of Theorem \ref{cor:symmetry}, there exists some $\eps_k > 0$ such that a solution $(f_\eps, g_\eps)$ of \eqref{syst:ode}-\eqref{syst:bc} with $g_\eps > 0$ exists if and only if $\eps \in (0,\eps_k)$. Moreover, $\eps_k=\eps_{-k}$ for every $k\geq 1$ and the sequence $(\eps_{k})_{k\geq 1}$ is increasing.
\end{proposition}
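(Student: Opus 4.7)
The plan is to reduce the problem to the sign of the second variation of $E_\eps$ at the planar branch in the $e_3$-direction, and then exploit a scaling to show monotonicity in $\eps$. Combining Proposition~\ref{prop:symmetry}(1) and (3), Theorem~\ref{thm:main_inv} (applied with $V=\R^2\times\{0\}$ and $e=e_3$) and Proposition~\ref{Lem:InPlane+Stab=>Min}, I would first observe that the existence of a solution $(f_\eps,g_\eps)$ of \eqref{syst:ode}-\eqref{syst:bc} with $g_\eps>0$ is equivalent to the first Dirichlet eigenvalue
\[
\mu_1(\eps) \;:=\; \inf_{\varphi\in H^1_0(B_1)}\frac{\int_{B_1}\bigl[|\nabla\varphi|^2-\tfrac{1}{\eps^2}W'\bigl(1-\tilde f_\eps^2(|x|)\bigr)\varphi^2\bigr]\,dx}{\int_{B_1}\varphi^2\,dx}
\]
being strictly negative. (For $\mu_1(\eps)<0 \Rightarrow$ escape: Proposition~\ref{Lem:InPlane+Stab=>Min} rules out $(\tilde f_\eps,0)$ as a minimiser, so by Theorem~\ref{thm:main_inv} the minimiser escapes; Proposition~\ref{prop:symmetry}(1) gives it the form \eqref{rad-sol} with $g_\eps\ge 0$ not identically zero, and the strong maximum principle on the $g$-equation in \eqref{syst:ode} yields $g_\eps>0$. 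The converse is Proposition~\ref{prop:symmetry}(3).) Setting $\eps_k:=\sup\{\eps>0:\mu_1(\eps)<0\}$, I would check $0<\eps_k<\infty$: for $\eps$ large, Remark~\ref{rem:convex} gives strict convexity of $E_\eps$ and hence a unique critical point, necessarily the planar one, so $\mu_1(\eps)\geq 0$; for $\eps$ small, Remark~\ref{rem:ODEUniq} exhibits the escaping degree-$k$ minimising $\mathbb{S}^2$-valued harmonic map as the $H^2$-limit of $E_\eps$-minimisers, so these must escape and $\mu_1(\eps)<0$.

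The main obstacle is to upgrade these sign statements into the sharp dichotomy $\{\mu_1<0\}=(0,\eps_k)$. I would proceed by rescaling: setting $R=1/\eps$ and $G_R(r):=\tilde f_{1/R}(r)$ on $(0,1)$, a direct change of variables shows $G_R$ minimises
\[
\mathcal{E}_R(G) \;=\; \int_0^1\!\bigl[(G')^2+\tfrac{k^2}{r^2}G^2\bigr]r\,dr \;+\; R^2\!\int_0^1 W(1-G^2)\,r\,dr
\]
over $G(0)=0$, $G(1)=1$. Since $G\mapsto W'(1-G^2)$ is strictly decreasing on $(0,1)$ by strict convexity of $W$, a Brezis--Oswald comparison in the parameter $R^2$ gives that $G_R$ is strictly pointwise increasing in $R$. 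Equivalently, the ``$\eps$-free'' profile $F_R(s):=G_R(s/R)$ on $(0,R)$ is, as the unique positive solution of the (Brezis--Oswald-type) ODE on $(0,R_1)$ with $F(0)=0$ parametrised by the value $F(R_1)$, strictly monotone increasing in that boundary value; since $F_{R_1}(R_1)=1>F_{R_2}(R_1)=G_{R_2}(R_1/R_2)$ for $R_1<R_2$, one obtains $F_{R_1}(s)>F_{R_2}(s)$ on $(0,R_1)$. A further rescaling identifies $\mu_1(\eps)=R^2\widehat\mu_1(R)$, where $\widehat\mu_1(R)$ is the first Dirichlet eigenvalue of $-\Delta-W'(1-F_R^2)$ on $B_R$; combining the pointwise strict decrease of $F_R$ in $R$ (which makes the zero-order term of the rescaled operator more negative) with domain enlargement (extending a minimising test function for $\widehat\mu_1(R_1)$ to $B_{R_2}$ by zero) yields $\widehat\mu_1$ strictly decreasing in $R$, so it has a unique zero $R_k$ and $\eps_k=1/R_k$. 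The delicate technical point here is the Brezis--Oswald-style monotonicity of $F_R$ in the boundary value at $s=R_1$, which can be established by a Picone-type argument applied to $V=F_{R_2}/F_{R_1}$ at an interior minimum, using the strictly decreasing property of $F\mapsto W'(1-F^2)$ to derive a contradiction.

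Finally, $\eps_k=\eps_{-k}$ is immediate from the target reflection $(u_1,u_2,u_3)\mapsto(u_1,-u_2,u_3)$, which preserves $E_\eps$ and maps the boundary condition \eqref{Eq:ubdDegk} for winding $k$ to that for $-k$. For $\eps_{k+1}>\eps_k$ with $k\ge 1$, write $\tilde f_\eps^{(k)}$ and $\mu_1^{(k)}$ for the planar profile and the stability eigenvalue associated with winding $k$. Since $(k+1)^2>k^2$, the coefficient of $f/r^2$ in the planar ODE is strictly larger for $k+1$ than for $k$, so a comparison of the same Brezis--Oswald flavour yields $\tilde f_\eps^{(k+1)}<\tilde f_\eps^{(k)}$ pointwise on $(0,1)$; strict convexity of $W$ then gives $W'\bigl(1-(\tilde f_\eps^{(k+1)})^2\bigr)>W'\bigl(1-(\tilde f_\eps^{(k)})^2\bigr)$, so $\mu_1^{(k+1)}(\eps)<\mu_1^{(k)}(\eps)$ for every $\eps>0$, whence $\eps_{k+1}>\eps_k$.
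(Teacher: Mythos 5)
Your proposal is correct and takes essentially the same approach as the paper: the equivalence between existence of an escaping solution and instability of the planar profile, the small-$\eps$/large-$\eps$ endpoint arguments (BBH convergence and convexity), the rescaling to the $\eps$-free ODE on $B_{1/\eps}$ combined with the profile comparison principle and the monotonicity of $W'$ plus an extension trick (the paper cites \cite[Proposition 3.5]{ODE_INSZ} where you propose a Picone/Brezis--Oswald argument, and transfers the stability inequality rather than phrasing it as strict monotonicity of $\widehat\mu_1$), and the same profile comparison across winding numbers all play the same roles in the paper's proof. The only ingredient you leave implicit is the continuity of $\eps \mapsto \tilde f_\eps$ (hence of the first eigenvalue), which is needed both for your ``unique zero'' of $\widehat\mu_1$ and for $\mu_1^{(k)}(\eps_k)=0$ in deducing $\eps_{k+1}>\eps_k$; the paper invokes this explicitly from \cite{ODE_INSZ} to show the set $I$ is open and to carry out its Step 4.
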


\begin{proof}[Proof of  Proposition~\ref{prop:symmetry}.] Let us fix $\eps>0$. By Theorem \ref{thm:main_inv} there exists a minimiser $u_\eps\in C^1\cap L^\infty(\Omega)$ of $E_\eps$ over $\mcA$. Using the transformation \eqref{transf_n} we can assume that $u_\eps\cdot e_3\geq 0$ in $\Omega$. By \eqref{Eq:Phi>or=0}, we know that: 
\begin{itemize}
\item either $u_\eps \cdot e_3>0$ in $\Omega$ and by Theorem~\ref{thm:main}, $E_\eps$ has exactly two minimisers, namely $u_\eps$ and $\mathcal{I} u_\eps$ where $\mathcal{I}$ is the reflection matrix about the $(x_1, x_2)$-plane cointaining $u_{bd}(\partial \Omega)$;
\item or $u_\eps \cdot e_3 \equiv 0$ in $\Omega$ and $u_\eps$ is the unique minimiser of $E_\eps$ (as point $2$ in Theorem \ref{thm:main_inv} does not hold).
\end{itemize}
Now, we consider the rotation
$$R_k(\theta)=\left(\begin{array}{ccc}
\cos k\theta & - \sin k\theta & 0\\
\sin k\theta & \cos k \theta & 0\\
0 & 0 & 1
\end{array}\right) \quad \textrm{for every $\theta \in [0, 2\pi)$}$$
 and define  the $\bS^{1}$-group action:
\be
\label{rot_sym}
u_{\eps,\theta} (x) = R_k(\theta)^{-1} u_\eps\bigg( \Pi\big(R_1(\theta)\cdot (x,0) \big)\bigg), \quad x\in \Omega,
\ee
where $\Pi:\R^3\to \R^2$ is the projection $\Pi(x_1,x_2,x_3)=(x_1,x_2)$. Then $u_{\eps,\theta}$ is also a minimiser of $E_\eps$ over $\mcA$. Thanks to the above discussion on the uniqueness of $u_\eps$, it follows that 
\[
u_\eps \equiv u_{\eps,\theta} \text{ for all } \theta \in [0,2\pi)
\]
(because $u_{\eps, \theta}\cdot e_3$ and $u_\eps\cdot e_3$ have the same sign).
This invariance implies the existence of three scalar functions $f_\eps, \hat f_\eps, g_\eps\in C^1(0,1)$ such that
$$u_\eps(r\cos \varphi, r\sin\varphi)=f_\eps(r)(\cos k\varphi, \sin k\varphi)+\hat f_\eps(r) (-\sin k\varphi, \cos k\varphi)+g_\eps(r) e_3,$$
for every $r\in (0,1)$ and $\varphi\in [0, 2\pi)$ (see e.g. proof of Proposition 2.1 in \cite{INSZ_AnnIHP}). Moreover, the Euler-Lagrange equation \eqref{E-L} of $u_\eps$ combined with the boundary condition $u_\eps=u_{bd}$ on $\partial \Omega$ imply that $\hat f_\eps\equiv 0$ in $(0,1)$ (i.e., $u_\eps$ is represented as in \eqref{rad-sol}) and the couple 
$(f_\eps, g_\eps)$ satisfies the system \eqref{syst:ode}-\eqref{syst:bc} (see e.g. the proof of Proposition 2.3 in \cite{INSZ_AnnIHP}).

Clearly, $g_\eps\geq 0$ (by the assumption at the beginning of the proof). We want to prove that $f_\eps >0$ in $(0,1)$. First, note that ${u^\circ}\in \mcA$ corresponding to $(|f_\eps|, g_\eps)$ via \eqref{rad-sol} has the same energy as $u_\eps$, i.e., $E_\eps({u^\circ})=E_\eps(u_\eps)$, 
so by the uniqueness of minimisers (see the beginning of the proof) we deduce that $f_\eps=|f_\eps|\geq 0$ in $(0,1)$. Moreover, by Step 2 in the 
proof of Theorem \ref{thm:main_inv} we know that $|u_\eps|\leq 1$ yielding $f_\eps^2+g_\eps^2\leq 1$. In particular, by \eqref{ass:W}, 
we deduce that the right hand side in \eqref{syst:ode} is non-negative. By the strong maximum principle applied to $f_\eps\in H^1(B^2)$ satisfying 
$-\Delta f_\eps+\frac{k^2}{r^2} f_\eps\geq 0$ in $B^2$ and $f_\eps=1$ on $\partial B^2$, we conclude that $f_\eps>0$ in $(0,1)$. 
Also, the strong maximum principle applied to  $g_\eps\in H^1(B^2)$ satisfying
$-\Delta g_\eps\geq 0$ in $B^2$ and $g_\eps\geq 0$ in $B^2$ implies that either $g_\eps>0$ in $B^2$, or $g_\eps\equiv 0$ in $B^2$.

It remains to prove the stated uniqueness of solutions of the system \eqref{syst:ode}-\eqref{syst:bc} satisfying $g_\eps\geq 0$ in $(0,1)$.
\smallskip

\noindent {\it Case 1. } If $(f_\eps, g_\eps > 0)$ is a $C^1$ solution of \eqref{syst:ode}-\eqref{syst:bc}, then the map $u_\eps$ defined by \eqref{rad-sol} with the ``$+$" sign is a critical point of $E_\eps$ with $u_\eps \cdot e_3 = g_\eps > 0$ (as one can easily verify \eqref{E-L}) 
and $f_\eps, g_\eps\in L^\infty$ implies that $u_\eps\in L^\infty$. Thus, by Theorem \ref{thm:main}, $u_\eps$ is the unique minimiser of $E_\eps$ in the set
\[
\mcA_+ = \{u \in \mcA: u \cdot e_3 \geq 0 \text{ a.e. in } \Omega\}.
\]
This proves that there is at most one solution of \eqref{syst:ode}-\eqref{syst:bc} satisfying $g_\eps > 0$.

\smallskip

\noindent {\it Case 2. } If $(\tilde f_\eps > 0, \tilde g_\eps \equiv 0)$ solves  \eqref{syst:ode}-\eqref{syst:bc}, then $\tilde f_\eps$ satisfies
\be
\label{nou_ecuat}
\left\{\begin{array}{l}
-\tilde f_\eps'' - \frac{1}{r} \tilde f_\eps' + \frac{k^2}{r^2} \tilde f_\eps = \frac{1}{\eps^2} \tilde f_\eps \,W'(1-\tilde f_\eps^2 ),\\
\tilde f_\eps(0) = 0, \tilde f_\eps(1) = 1,
\end{array}\right.
\ee
which is known to enjoy existence and uniqueness for every $\eps>0$ (see e.g. \cite{tang, farina-guedda, Hervex2, ODE_INSZ}). Moreover, if the solution 
$(f_\eps, g_\eps>0)$ exists (yielding uniqueness - up to the reflection $\cal I$ - of the minimiser $u_\eps$ corresponding to $(f_\eps, g_\eps>0)$ via \eqref{rad-sol}), we will show that $\tilde u_\eps$ corresponding to $(\tilde f_\eps, \tilde g_\eps\equiv 0)$ via \eqref{rad-sol} is unstable for $E_\eps$ over $\mcA$. For that, assume by contradiction that $\tilde u_\eps$ is stable (in the direction $e_3$). As $\tilde u_\eps$ is ``non-escaping", then Proposition \ref{Lem:InPlane+Stab=>Min} would imply that $\tilde u_\eps$ should be the unique minimiser of $E_\eps$ over $\mcA$ which is a contradiction with the existence of $u_\eps$.
\end{proof}

\begin{proof}[Proof of Proposition~\ref{prop:dicho}] Let $I$ denote the set of $\eps > 0$ such that a solution $(f_\eps, g_\eps)$ of \eqref{syst:ode}-\eqref{syst:bc} satisfying $g_\eps > 0$ exists and let $J = (0,\infty) \setminus I$.
Recall that $(\tilde f_\eps, \tilde g_\eps)$ is the unique solution of \eqref{syst:ode}-\eqref{syst:bc} satisfying $\tilde g_\eps \equiv 0$. By Proposition \ref{prop:symmetry}, $(\tilde f_\eps, \tilde g_\eps)$ is a stable critical point of $E_\eps$ if and only if $\eps \in J$. By the definition of stability of the solution $(\tilde f_\eps, \tilde g_\eps)$ (see Proposition~\ref{Lem:InPlane+Stab=>Min}) and the continuity of the map $\eps\in (0, \infty)\mapsto \tilde f_\eps\in C^0([0,1])$ (see e.g.  \cite{ODE_INSZ}), it follows that $J$ is closed and $I$ is open.

\medskip

\noindent {\it Step 1. We show that there is some $\tilde \eps_k> 0$ such that $(0,\tilde \eps_k) \subset I$.}
Indeed, we note that, as $\eps \rightarrow 0$, minimisers of $E_\eps$ over $\mcA$ converge strongly in $H^2(\Omega)$ to the following $\mathbb{S}^2$-valued minimising harmonic maps with boundary values given by \eqref{Eq:ubdDegk} (see e.g. \cite{BBH}):
$$(r, \varphi)\in (0,1)\times [0, 2\pi)\mapsto \left(\frac{2r^k}{1+r^{2k}} \cos (k\varphi), \frac{2r^k}{1+r^{2k}}\sin(k\varphi), \pm\frac{1-r^{2k}}{1+r^{2k}}\right)\in \bS^2.$$
In view of Proposition \ref{prop:symmetry}, the conclusion of Step 1 follows.

\medskip

\noindent {\it Step 2. We show that there is some $\hat \eps_k> \tilde \eps_k$ such that $[\hat \eps_k, \infty) \nsubseteq I$.}
Indeed, if $\eps>0$ is large enough, then the energy $E_\eps$ is strictly convex (see Remark \ref{rem:convex}) and thus, the unique critical point of $E_\eps$ over $\mcA$ is given by the solution $(\tilde f_\eps >0, \tilde g_\eps\equiv 0)$ of the system \eqref{syst:ode}-\eqref{syst:bc} via \eqref{rad-sol}. The conclusion of Step 2 follows.

\medskip

\noindent {\it Step 3. We prove the desired dichotomy with respect to the critical value 
\[
\eps_k = \sup \{\eps>0\, : \, (0,\eps) \subset I\} \in [\tilde \eps_k, \hat \eps_k].
\]
}
Indeed, we start by noting that $\eps_k \in J$
because $I$ is open. Stability then implies that
\begin{equation}
\int_{\Omega} \Big[|\nabla \varphi|^2 - \frac{1}{\eps_k^2} W'(1 - |\tilde f_{\eps_k}|^2)\,\varphi^2\Big]\,dx \geq 0 \quad \text{ for all } \varphi \in H^1_0(\Omega).
	\label{Eq:26IV18-E1}
\end{equation}
To conclude, we only need to show that, for every $\eps > \eps_k$,
\begin{equation}
\int_{\Omega} \Big[|\nabla \varphi|^2 - \frac{1}{\eps^2} W'(1 - |\tilde f_{\eps}|^2)\,\varphi^2\Big]\,dx \geq 0\quad \text{ for all } \varphi \in H^1_0(\Omega).
	\label{Eq:26IV18-E2}
\end{equation}
(This would mean that $(\tilde f_\eps,\tilde g_\eps)$ is stable and so $\eps \in J$ for all $\eps > \eps_k$.)
Define
\[
\bar f_\eps(x) =  \tilde f_\eps(\eps\,x) \text{ for } x \in \Omega_\eps = \{|x| < \eps^{-1}\}.
\]
Then, by \eqref{Eq:26IV18-E1}, we have
\[
\int_{\Omega_{\eps_k}} \Big[|\nabla \varphi|^2 -  W'(1 - |\bar f_{\eps_k}|^2)\,\varphi^2\Big]\,dx \geq 0 \text{ for all } \varphi \in H^1_0(\Omega_{\eps_k}).
\]
By the comparison principle \cite[Proposition 3.5]{ODE_INSZ}, we have $1\geq \bar f_\eps \geq \bar f_{\eps_k}\geq 0$ in $\Omega_\eps$ for $\eps > \eps_k$, which implies that
\[
W'(1 - |\bar f_{\eps}|^2) \le W'(1 - |\bar f_{\eps_k}|^2) \text{ in } \Omega_\eps.
\]
Extending $\bar f_\eps$ to $\Omega_{\eps_k}$ by setting $\bar f_\eps = 1$ in $\Omega_{\eps_k} \setminus \Omega_{\eps}$, since $W'(0)=0$, we thus obtain
\[
\int_{\Omega_{\eps_k}} \Big[|\nabla \varphi|^2 -  W'(1 - |\bar f_{\eps}|^2)\,\varphi^2\Big]\,dx \geq 0 \text{ for all } \varphi \in H^1_0(\Omega_{\eps_k}).
\]
This implies the stability inequality \eqref{Eq:26IV18-E2} and concludes Step 3.

\medskip

\noindent {\it  Step 4. We prove that there exists a positive function $\varphi_k \in H_0^1(\Omega)$ such that}
\[
\int_{\Omega} \Big[|\nabla \varphi_k|^2 - \frac{1}{\eps_k^2} W'(1 - |\tilde f_{\eps_k}|^2)\,\varphi_k^2\Big]\,dx = 0 .
\]
Indeed, for $\eps>0$ and $\tilde f_\eps$ the solution of \eqref{nou_ecuat} for a fixed $k\in \Z\setminus\{0\}$, let
\[
\lambda_{1,k}(\eps) := \inf_{\varphi \in H^1_0(\Omega), \|\varphi\|_{L^2(\Omega)} = 1}\int_{\Omega} \Big[|\nabla \varphi|^2 - \frac{1}{\eps^2} W'(1 - |\tilde f_{\eps}|^2)\,\varphi^2\Big]\,dx.
\]
As $W'(1 - |\tilde f_{\eps}|^2)\in L^\infty(\Omega)$, standard arguments show that the infimum is attained by some positive and smooth $\varphi_{\eps,k} \in H^1_0(\Omega)$ which satisfies 
\[
-\Delta \varphi_{\eps,k} - \frac1{\eps^2}W'(1 - |\tilde f_{\eps}|^2)\,\varphi_{\eps,k} = \lambda_{1,k}(\eps)\varphi_{\eps,k} \quad \text{ in } \Omega.
\]
Furthermore, by elliptic estimates, $\tilde f_\eps$ and $\varphi_{\eps,k}$ depend continuously on $\eps$. On the other hand, by the dichotomy, $\lambda_{1,k}(\eps) \geq 0$ for all $\eps \geq \eps_k$ and $\lambda_{1,k}(\eps) < 0$ for $\eps < \eps_k$. Hence $\lambda_{1,k}(\eps_k) = 0$. Letting $\varphi_k = \varphi_{\eps_k,k}$ yields Step 4.

\medskip

\noindent {\it  Step 5. We have $\eps_k=\eps_{-k}$ for every $k\geq 1$ and the sequence $(\eps_{k})_{k\geq 1}$ is increasing.} Indeed, by the transformation $u=(u_1,u_2, u_3)\mapsto (u_1,-u_2, u_3)$, the minimisation problem corresponding to the boundary data $u_{bd}$ with winding number $k$ is equivalent with the one of winding number $-k$. Therefore, $\eps_k=\eps_{-k}$ for every $k\geq 1$. Let us now prove that $\eps_{k-1}< \eps_k$ for every $k\geq 2$. To make it clearer as we are now working with different winding numbers, let us now rename $\tilde f_\eps$ to $\tilde f_{\eps,k}$.

Now, note that $\tilde f_{\eps_k,k-1}$ is the solution of the problem \eqref{nou_ecuat} for $\eps=\eps_k$ and the winding number $k-1$, i.e. 
\be
\label{nou_ecuatXX}
\left\{\begin{array}{l}
-f'' - \frac{1}{r}  f' + \frac{(k-1)^2}{r^2} f = \frac{1}{\eps_k^2}f \,W'(1-f^2 ),\\
 f(0) = 0, f(1) = 1.
\end{array}\right.
\ee
On the other hand, $\tilde f_{\eps_k,k}$ is a subsolution to \eqref{nou_ecuatXX} with $\lim_{r \rightarrow 0} \frac{\tilde f_{\eps_k,k}}{r^{k-1}} = 0$. The comparison principle \cite[Proposition 3.5]{ODE_INSZ} applied for the problem \eqref{nou_ecuatXX} implies that $0 < \tilde f_{\eps_{k},k} < \tilde f_{\eps_k,k-1} < 1$ in $(0,1)$.
This yields $W'(1 - |\tilde f_{\eps_{k},k}|^2) > W'(1 - |\tilde f_{\eps_k,k-1}|^2)$ in $(0,1)$. Since by Step 3, $(\tilde f_{\eps_{k},k},0)$ is stable for $E_{\eps_k}$ (see \eqref{Eq:26IV18-E1}), we deduce that $(\tilde f_{\eps_k,k-1},0)$ is also stable for $E_{\eps_k}$. As $\eps_{k-1}$ is the smallest $\eps$ for which $(\tilde f_{\eps,k-1},0)$ is stable for $E_\eps$, we deduce that $\eps_{k}\geq \eps_{k-1}$. On the other hand, if $\eps_k = \eps_{k-1}$, then, by Step 4,
\begin{align*}
0 
	&= \int_{\Omega} \Big[|\nabla \varphi_{k-1}|^2 - \frac{1}{\eps_{k-1}^2} W'(1 - |\tilde f_{\eps_{k-1},k-1}|^2)\,\varphi_{k-1}^2\Big]\,dx \\
	&= \int_{\Omega} \Big[|\nabla \varphi_{k-1}|^2 - \frac{1}{\eps_{k}^2} W'(1 - |\tilde f_{\eps_{k},k-1}|^2)\,\varphi_{k-1}^2\Big]\,dx \\
	&> \int_{\Omega} \Big[|\nabla \varphi_{k-1}|^2 - \frac{1}{\eps_{k}^2} W'(1 - |\tilde f_{\eps_{k},k}|^2)\,\varphi_{k-1}^2\Big]\,dx ,
\end{align*}
which contradicts the stability of $(\tilde f_{\eps_k,k},0)$ for $E_{\eps_k}$. We conclude that $\eps_k > \eps_{k-1}$.
\end{proof}

\section{Harmonic map problem}
\label{sec:HMP}

In this section, we consider $\Sphere^{n-1}$-valued maps. Recall that a map $u \in H^1(\Omega,{\mathbb S}^{n-1})$ is called a weakly harmonic map in $\Omega$ if it satisfies in the weak sense the harmonic map equation
\[
-\Delta u  = |\nabla u|^2 u \text{ in } \Omega.
\]
This is the Euler-Lagrange equation for critical points of the Dirichlet energy
\be\label{def:energ*} 
E(u)=\int_\Omega\frac{1}{2}| \nabla u|^2\, dx.
\ee 
A weakly harmonic map is called a minimising harmonic map in $\Omega$ if there holds
\[
E(u) \leq E(w) \text{ for all } w \in H^1(\Omega,{\mathbb S}^{n-1}) \text{ such that } w|_{\partial \Omega} = u|_{\partial\Omega} .
\]
We start by proving the analogue of Theorem \ref{thm:main} for weakly harmonic maps that are positive in a direction $e\in \bS^{n-1}$
inside $\Omega$.

\begin{theorem}\label{prop:HMP}
Let $m \geq 1$, $n \geq 2$, $\Omega\subset\R^m$ be a bounded domain with smooth boundary, and $u_{bd} \in H^{1/2}(\partial\Omega, \mathbb{S}^{n-1})$ be a given boundary data. Assume that 
$u \in \mcA \cap  H^1(\Omega,{\mathbb S}^{n-1})$ is a weakly harmonic map satisfying
\be\label{ass:Phi+}
u \cdot e>0\textrm{ a.e. in }\Omega
\ee 
in a (fixed) direction $e\in \bS^{n-1}$.
Then $u$ is a minimising harmonic map and we have  the following dichotomy:
\begin{enumerate}
\item If there exists some Lebesgue point $x_0\in\partial\Omega$ of $u_{bd}$ such that $u_{bd}(x_0)\cdot e>0$ then $u$ is the unique minimising harmonic map in $\mcA \cap  H^1(\Omega,{\mathbb S}^{n-1})$.
\item If $u_{bd}(x)\cdot e=0$ for $\mathcal{H}^{m-1}$-a.e. $x\in \partial\Omega$, then all minimising harmonic maps in $\mcA \cap  H^1(\Omega,{\mathbb S}^{n-1})$ are given by $Ru$, where $R\in O(n)$ is an orthogonal transformation of $\R^n$ satisfying $Rx=x$ for all $x\in  Span \, u_{bd}(\partial\Omega)$.
\end{enumerate}
\end{theorem}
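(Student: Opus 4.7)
\textbf{Proof plan for Theorem \ref{prop:HMP}.} The plan is to mirror the three-step argument of Theorem \ref{thm:main}, replacing the Euler--Lagrange equation with the weakly harmonic map equation and exploiting the pointwise constraint $|u| \equiv 1$ (in place of the strict convexity of $W$) to eliminate the potential-like term. After rotating so that $e = e_n$, for any competitor $w = u + v \in \mcA \cap H^1(\Omega; \mathbb{S}^{n-1})$ with $v \in H^1_0(\Omega; \R^n) \cap L^\infty$ (automatically $|v| \le 2$), I would expand
\[
E(w) - E(u) = \int_\Omega \nabla u \cdot \nabla v\,dx + \tfrac{1}{2}\int_\Omega |\nabla v|^2\,dx,
\]
then use the weakly harmonic map identity $\int \nabla u \cdot \nabla \psi\,dx = \int |\nabla u|^2\,u\cdot\psi\,dx$ (valid for all $\psi \in H^1_0 \cap L^\infty$) together with the pointwise relation $u\cdot v = -\tfrac{1}{2}|v|^2$ forced by $|u+v|^2 = |u|^2 = 1$, to arrive at
\[
E(w) - E(u) = \tfrac{1}{2}\int_\Omega \bigl[|\nabla v|^2 - |\nabla u|^2 |v|^2\bigr]\,dx =: F(v).
\]

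Next I would apply the Hardy decomposition to $F$ via $\Phi := u\cdot e$. The weakly harmonic map equation gives $-\Delta \Phi = |\nabla u|^2 \Phi$ weakly in $\Omega$, and the positivity hypothesis \eqref{ass:Phi+} combined with De Giorgi--Moser regularity and the strong maximum principle (or, equivalently, with the hemisphere regularity theory of J\"ager--Kaul/Hildebrandt--Kaul--Widman) yields a continuous, strictly positive representative of $\Phi$ throughout $\Omega$. For $v \in C_c^\infty(\Omega;\R^n)$ and $\psi := v/\Phi$, the integration-by-parts computation \eqref{desired30} then carries over verbatim to produce
\[
F(v) = \tfrac{1}{2}\int_\Omega \Phi^2 |\nabla \psi|^2\,dx \geq 0,
\]
and a density argument based on $\int_\Omega |\nabla u|^2\,dx < \infty$ together with uniform $L^\infty$-bounds extends the inequality to all $v \in H^1_0 \cap L^\infty(\Omega;\R^n)$. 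Consequently $u$ is a minimising harmonic map. For the classification, if $w = u+v$ is another minimiser then $F(v) = 0$, which forces $v = \lambda\Phi$ for some $\lambda \in \R^n$, and the sphere constraint reduces to
\[
2\lambda\cdot u + |\lambda|^2 \Phi = 0 \quad \text{a.e.\ in } \Omega.
\]
The dichotomy then follows exactly as in Step 3 of the proof of Theorem \ref{thm:main}: in Situation 1, taking the $H^{1/2}$-trace on $\partial\Omega$ and using the Lebesgue point $x_0$ with $u_{bd}(x_0)\cdot e > 0$ forces $\lambda = 0$, giving uniqueness. In Situation 2, after rotating so that $V = \R^k\times\{0\}$ I would first replace $u$ by $\tilde u := (u_1,\dots,u_k,0,\dots,0,\sqrt{u_{k+1}^2+\cdots+u_n^2})$, which is still $\mathbb{S}^{n-1}$-valued and satisfies $E(\tilde u)\le E(u)$ by a Cauchy--Schwarz estimate on the gradient of the last component; hence $\tilde u$ is also a minimiser with $u_{k+1} \equiv \cdots \equiv u_{n-1} \equiv 0$ and $u_n = \Phi > 0$. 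Taking traces on $\partial\Omega$ yields $\lambda\cdot u_{bd} = 0$, hence $\lambda\perp V$, and then the algebraic identity collapses to $|\lambda|^2 + 2\lambda_n = 0$, which identifies $w$ with $Ru$ for the orthogonal transformation $R$ fixing $V$ and sending $e_n$ to the unit vector $(0,\dots,0,\lambda_{k+1},\dots,\lambda_{n-1},\lambda_n+1)$.

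The hard part will be the technical handling of the Hardy decomposition when $|\nabla u|^2$ is only in $L^1(\Omega)$, in contrast to the bounded coefficient $W'(1-|u|^2)\in L^\infty(\Omega)$ available in Theorem \ref{thm:main}: the density extension from $C_c^\infty$ to $H^1_0\cap L^\infty$ must combine the $L^1$-integrability of the coefficient with the uniform $L^\infty$-bound $|v|\le 2$, and one must carefully verify that the quadratic form $F$ is continuous along such approximations. A secondary, milder issue is the pointwise positivity of $\Phi$, required for $v/\Phi$ to make sense on the compact support of each test function; this follows from the strong maximum principle for nonnegative weak solutions of the linear equation $-\Delta \Phi - |\nabla u|^2 \Phi = 0$, or directly from the smoothness of harmonic maps into an open hemisphere.
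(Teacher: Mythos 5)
Your overall strategy coincides with the paper's proof in every structural respect: the same energy expansion using the sphere constraint $2u\cdot v+|v|^2=0$ together with the weakly harmonic map equation, the same Hardy decomposition $v=\Phi\psi$ with $\Phi=u\cdot e_n$, the same approximation scheme (dominated convergence pairing the $L^1$ coefficient $|\nabla u|^2$ with uniformly bounded approximants, plus Fatou on compact subsets), and the same classification via $v=\lambda\Phi$ and the identity $2\lambda\cdot u+|\lambda|^2\Phi=0$. You correctly single out the $|\nabla u|^2\in L^1$ issue in the density step, which is exactly the point the paper treats with care; and your spelled-out classification (which the paper omits, referring back to Step 3 of Theorem \ref{thm:main}) is correct, including the replacement of $u$ by $\tilde u=(u_1,\dots,u_k,0,\dots,0,\sqrt{u_{k+1}^2+\cdots+u_n^2})$.

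The one step whose justification would fail as written is the local positivity of $\Phi$. You claim a ``continuous, strictly positive representative of $\Phi$'' via De Giorgi--Moser, or alternatively via hemisphere regularity theory. Neither applies here: De Giorgi--Moser requires the zeroth-order coefficient (or the right-hand side) in $L^q$ with $q>m/2$, whereas $|\nabla u|^2$ is only in $L^1$; and the J\"ager--Kaul / Hildebrandt--Kaul--Widman results concern minimising (or a priori smooth) harmonic maps, or maps with values in $\{x\cdot e_n>\delta\}$ for a \emph{fixed} $\delta>0$ --- invoking them is circular, since a local uniform lower bound on $\Phi$ is precisely what you are trying to establish, and merely weakly harmonic maps in dimension $m\ge 3$ can be everywhere discontinuous (Rivi\`ere), so no such regularity is available under the hypotheses of the theorem. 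The repair is what the paper does, and it needs no regularity theory and no continuity at all: since $-\Delta\Phi=|\nabla u|^2\Phi\ge 0$ weakly, $\Phi\in H^1(\Omega)$ is superharmonic, and the weak Harnack inequality (see \cite[Theorem 8.18]{GT}), applied after simply discarding the nonnegative potential term, gives $\Phi\ge\delta_\omega>0$ a.e.\ on each compact $\omega\subset\Omega$ --- which is all that the decomposition $\psi=v/\Phi$ requires. Your final suggestion (``strong maximum principle for nonnegative weak solutions of $-\Delta\Phi-|\nabla u|^2\Phi=0$'') is morally this argument, but as stated it appeals to a maximum principle for an operator with an $L^1$ potential, which is not standard; drop the potential term and quote the weak Harnack inequality for $H^1$ supersolutions of the Laplace equation instead.
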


\begin{proof}[Proof of Theorem \ref{prop:HMP}] Up to a rotation, we can assume that $e=e_n$ (as in \eqref{en}). It is clear that $\Phi=u \cdot e_n>0$ satisfies 
\be
\label{har}
-\Delta \Phi=|\nabla u|^2\Phi \ \hbox{  in } \ \Omega.
\ee
We consider the perturbation $u+v$ with $v\in H^1_0(\Omega,\R^n)$ and $|u+v|=1$ a.e. in $\Omega$ (in particular, $|v|\leq 2$ in $\Omega$). Then 
\be\label{const:uv}
 2u\cdot v+|v|^2=0 \quad \textrm{ a.e. in $\Omega$. }
\ee  Using the harmonic map equation, we obtain
$$
2\int_\Omega \nabla u\cdot \nabla v=2\int_\Omega |\nabla u|^2 u\cdot v\,dx\stackrel{\eqref{const:uv}}{=}-\int_\Omega |\nabla u|^2 |v|^2\,dx,
$$
which leads to the second variation $\cal D$ of $E$ at $u$:
\be
\label{expd}
\int_\Omega |\nabla (u+v)|^2\,dx-\int_\Omega |\nabla u|^2\,dx =\int_\Omega|\nabla v|^2- |\nabla u|^2|v|^2\,dx=:{\cal D}(v).
\ee
(For simplicity, we still called  $\cal D$ the second variation of $E$ at $u$ even if we do not ask that $v$ is orthogonal to $u$).
To show that $u$ is minimising, we show that ${\cal D}(v)\ge 0$ for all $v\in H^1_0\cap L^\infty(\Omega;\RR^n)$ (note that this is a class larger than what we need, as we do not require that $v$ satisfy the pointwise constraint \eqref{const:uv}).
To this end we take an arbitrary map $\bar v\in C_c^\infty(\Omega; \RR^n)$ and decompose it as  $\bar v = \Phi \psi$ in $\Omega$. 
By \eqref{ass:Phi+} and \eqref{har}, we note that $\Phi$ is a superharmonic function (i.e., $-\Delta \Phi\geq 0$ in $\Omega$) that belongs to $H^1(\Omega)$. Thus, the weak Harnack inequality (see e.g. \cite[Theorem 8.18]{GT}) implies that on the support $\omega$ of $v$ (that is compact in $\Omega$), we have $\Phi\geq \delta>0$ for some $\delta$ (which may depend on $\omega$). 
Thus, one obtains $\psi\in H^1_0\cap L^\infty(\Omega;\RR^n)$ and then, for $1 \leq j \leq n$, integration by parts yields:
\begin{align*}
\int_\Omega |\nabla u|^2 \bar v_j^2\,dx
	&= \int_\Omega  |\nabla u|^2 \Phi \,\Phi\,\psi_j^2\,dx\\
	&\stackrel{\eqref{har}}{=} \int_\Omega \nabla \Phi \cdot \nabla (\Phi\,\psi_j^2)\,dx\\
	&= \int_\Omega \Big[-\Phi^2 |\nabla \psi_j|^2 + |\nabla\bar v_j|^2\Big]\,dx.
\end{align*}
It follows that 
$$
{\cal D}(\bar v) = \int_\Omega \Phi^2 |\nabla \frac{\bar v}{\Phi}|^2\,dx \ge 0 \text{ for all }\bar v\in C_c^\infty(\Omega;\RR^n).
$$ 
It is well known that for every $v\in L^\infty\cap H^1_0(\Omega;\RR^n)$, there exists a sequence $v_k\in C_c^\infty(\Omega;\RR^n)$  such that $v_k\to v$ and $\nabla v_k\to \nabla v$ in $L^2$ and a.e. in $\Omega$ and $|v_k|\leq \|v\|_{L^\infty}+1$ in $\Omega$. In particular, by dominated convergence theorem, we have 
${\cal D}(v_k)\to {\cal D}(v)$ 
thanks to \eqref{expd}. Thus, we deduce that for every compact $\omega\subset \Omega$,
$${\cal D}(v)=\lim_{k\to \infty} {\cal D}(v_k)\geq \liminf_{k\to \infty} \int_\omega \Phi^2 |\nabla \frac{v_k}\Phi|^2\,dx\geq \int_\omega \Phi^2 |\nabla \frac{v}\Phi|^2\,dx\geq 0$$ for all 
$v\in H^1_0\cap L^\infty(\Omega;\RR^n)$, 
where we used Fatou's lemma.
In particular, $u$ is a minimising harmonic map by \eqref{expd}. We can now argue as in the Step 3 of the proof of Theorem~\ref{thm:main} to obtain the rest of the result. We omit the details.
\end{proof}

\begin{proof}[Proof of Theorem \ref{prop:HMP_inv}] Up to a rotation, we may assume that $e_n=e$ with $u_{bd}$ satisfying the non-negativity assumption \eqref{ass:bd} in direction $e_n$. First we note that $\mcA\cap H^1(\Omega; \bS^{n-1})\neq \emptyset$.
Indeed, one can consider the harmonic extension $u_0$ of $u_{bd}$ inside $\Omega$ implying that $u_0(\Omega)\subset \overline{B^n}_+:=\overline{B^{n}}\cap \{x_n\geq 0\}$; 
then set $u$ to be the retraction of $u_0$ onto the hemisphere $\bS^{n-1}_+$ using the pole $-e_n$, namely $u = \Psi \circ u_0$, where $\Psi:\overline{B^n}_+\to \bS^{n-1}_+$ is defined as
\[
\Psi(x',x_n) = (tx', -1 + t(x_n + 1)) \text{ with } t = \frac{2(x_n+1)}{|x'|^2 + (x_n+ 1)^2}, \, |x'|^2+x_n^2\leq 1 \text{ and }x_n \geq 0.
\]
Therefore, $u\in \mcA\cap H^1(\Omega; \bS^{n-1})$. By the direct method of the calculus of variation, we deduce the existence of a minimising harmonic map inside $\mcA\cap H^1(\Omega; \bS^{n-1})$. We claim that any minimising harmonic map $v$ in $\mcA \cap  H^1(\Omega,{\mathbb S}^{n-1})$ satisfies 
\be
\label{cl}
\text{ either } |v_n| > 0 \text{ or } v_n \equiv 0 \text{ in } \Omega.
\ee
To see this, first note that, in view of  \eqref{ass:bd},  $\hat v = (v_1, \ldots, v_{n-1}, |v_n|)$ is also a minimising harmonic map in $\mcA \cap  H^1(\Omega,{\mathbb S}^{n-1})$. Then, by the harmonic map equation, $|v_n| \in H^1(\Omega)$ is super-harmonic in $\Omega$. Thus, if the (essential) infimum of $|v_n|$ on some ball $B$ with $\bar B\subset \Omega$ is zero, then the weak Harnack inequality (see e.g. \cite[Theorem 8.18]{GT}) implies that $|v_n|$ must be identically zero in $B$; moreover, by a continuation argument on every path passing through $B$, we conclude that $|v_n|$ vanishes in $\Omega$  (as $\Omega$ is an open connected set, so path-connected). This implies the claim \eqref{cl}.
The rest of the argument is similar to Step 4 in the proof of Theorem~\ref{thm:main_inv}. We omit the details.
\end{proof}

To conclude this section, we note that the same methods yield the following analogue of Proposition \ref{Lem:InPlane+Stab=>Min} for harmonic maps.

\begin{prop}\label{Lem:HPInPlane+Stab=>Min}
Let $m \geq 1$, $n \geq 2$, $\Omega\subset\R^m$ be a bounded domain with smooth boundary, $e\in \bS^{n-1}$. 
If $u \in H^1(\Omega, \Sphere^{n-1}\cap e^\perp)$ is a weakly harmonic map and $u$ is stable in the direction $e$, i.e.
\be
\label{numar_nou}
\frac{d^2}{dt^2}\big|_{t=0}E\big(\frac{u+t\f e}{|u+t\f e|} \big)=\int_\Omega \Big[|\nabla \varphi|^2 - |\nabla u|^2\,\varphi^2\Big]\,dx \geq 0 \text{ for all } \varphi \in H_0^1(\Omega;\mathbb{R}),
\ee
then $u$ is a minimising harmonic map. Moreover, if $u(\Omega)\subset Span\, u(\partial \Omega)$, then $u$ is the unique minimising harmonic map within its boundary data $u\big|_{\partial \Omega}$.
\end{prop}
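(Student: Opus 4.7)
The plan is to follow the same scheme as in the proof of Proposition~\ref{Lem:InPlane+Stab=>Min}: first replace the strictly-convex Ginzburg-Landau bound by the exact harmonic-map energy identity already derived in the proof of Theorem~\ref{prop:HMP}, then apply the scalar stability inequality componentwise to obtain minimality, and finally use Theorem~\ref{prop:HMP_inv} in place of Theorem~\ref{thm:main_inv} to settle uniqueness.

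For minimality, I would fix an arbitrary competitor $w \in \mcA \cap H^1(\Omega, \bS^{n-1})$ with $w|_{\partial\Omega} = u|_{\partial\Omega}$ and set $v := w-u \in H^1_0(\Omega;\R^n) \cap L^\infty(\Omega;\R^n)$ (the $L^\infty$ bound $|v|\leq 2$ coming from $|u|=|w|=1$). The constraint $|u+v|^2 = |u|^2 = 1$ gives $2 u \cdot v + |v|^2 = 0$ a.e., and testing the weak harmonic map equation $-\Delta u = |\nabla u|^2 u$ against $v$ exactly as in the derivation of \eqref{expd} produces the identity
\[
E(w) - E(u) = \tfrac{1}{2}\int_\Omega \bigl(|\nabla v|^2 - |\nabla u|^2\,|v|^2\bigr)\,dx = \tfrac{1}{2}\mathcal{D}(v).
\]
Writing $v = (v_1,\dots,v_n)$ with each $v_j \in H^1_0(\Omega)$ and applying the scalar stability inequality \eqref{numar_nou} to $\varphi = v_j$ and summing over $j$ yields $\mathcal{D}(v) \geq 0$, so $E(w) \geq E(u)$, proving that $u$ is a minimising harmonic map.

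For uniqueness under the extra assumption $u(\Omega) \subset V := Span\, u(\partial\Omega)$, I would invoke Theorem~\ref{prop:HMP_inv}. Since $u$ takes values in $e^\perp$, condition~(1) of that theorem is automatic, so if $u$ were not the unique minimising harmonic map then condition~(2) would also have to hold, providing an ``escaping'' minimiser $\tilde u$ of $E$ over $\mcA_{res}^*$ with $\tilde u(\Omega) \not\subset V$. The weak-Harnack dichotomy used in the proof of Theorem~\ref{prop:HMP_inv}, together with a sign flip (harmless since $u_{bd}\cdot e \equiv 0$), would force $\tilde u \cdot e > 0$ in $\Omega$, and Theorem~\ref{prop:HMP} would then promote $\tilde u$ to a minimising harmonic map on all of $\mcA \cap H^1(\Omega;\bS^{n-1})$. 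The ``Moreover'' clause of Theorem~\ref{prop:HMP_inv} would yield $\tilde u = R u$ for some $R \in O(n)$ fixing $V$ pointwise; since such $R$ preserves $V^\perp$, the non-escaping of $u$ would force $\tilde u(\Omega) = R(u(\Omega)) \subset R(V) = V$, contradicting $\tilde u(\Omega) \not\subset V$. Hence condition~(2) fails and $u$ is the unique minimising harmonic map. The only mildly delicate technical point in this plan is justifying the test of the harmonic map equation against a general $v \in H^1_0 \cap L^\infty$ when $|\nabla u|^2$ is only known to be in $L^1$, but this is exactly the density plus Fatou argument already performed in the proof of Theorem~\ref{prop:HMP}.
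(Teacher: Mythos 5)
Your proposal is correct and is essentially the paper's intended argument: the paper gives no separate proof of Proposition~\ref{Lem:HPInPlane+Stab=>Min}, stating only that ``the same methods yield'' it, and your proof instantiates exactly those methods --- the exact energy identity \eqref{expd} from the proof of Theorem~\ref{prop:HMP} in place of the Ginzburg--Landau lower bound \eqref{rel:endif3}, the componentwise use of the scalar stability inequality as in Proposition~\ref{Lem:InPlane+Stab=>Min}, and Theorem~\ref{prop:HMP_inv} (with the weak-Harnack dichotomy and Theorem~\ref{prop:HMP}) for the uniqueness clause. The technical points you flag (testing the harmonic map equation against $H^1_0\cap L^\infty$ fields, and the fact that restricted minimisers over $\mcA_{res}^*$ are weakly harmonic since the subsphere is totally geodesic) are handled exactly as in the cited proofs, so there is no gap.
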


\appendix
\section{Appendix}
\label{sec:d0}

In the introduction we discussed the ``escaping" and ``non-escaping" phenomena of minimisers of $E_\eps$ and $E$ considering specific examples in Theorem \ref{cor:symmetry} and Example \ref{examp}. In both cases the boundary data  carried a {\it non-zero topological degree} (as a map $u_{bd}: \partial \Omega \to u_{bd}(\partial\Omega)$), in addition to the assumption that  the co-dimension of $Span \, u_{bd}(\partial \Omega)$ in the target space $\R^n$ is nonzero. These examples illustrated the effect of ``escaping" and ``non-escaping" phenomena on minimisers, in particular, the relation between ``escape'' and smoothness of minimising harmonic maps. 

In this section we would like to give a simple example of ``escape" and ``non-escape" phenomena for minimisers with a boundary data carrying a {\it zero topological degree} on $\partial \Omega$. In particular, we would like to point out that in this case there exist smooth and ``non-escaping" minimising harmonic maps.
More precisely, let $\Omega\subset \R^2$ be a bounded smooth simply connected domain, $n\geq 3$ and for some fixed $a \in \RR$, we consider the ``horizontal" harmonic map 
\be
\label{aleg}
u_{a}(x_1,x_2)=(\cos(ax_1), \sin(ax_1),0,\dots,0)\in \R^n \quad \textrm{for every } x\in \R^2. 
\ee
Restricting $u_a$ to the boundary $\partial \Omega$ we define $u_{bd}:\partial \Omega\to \bS^1=\bS^1\times\{0_{\R^{n-2}}\}$ (i.e., $u_{bd} \equiv u_a \big|_{\partial \Omega}$).  It is clear that $u_{bd}$ carries a zero winding number on $\partial \Omega$ and can be smoothly lifted by $\f_{bd}(x)=ax_1$ on $\partial \Omega$. We will show that with the boundary data $u_{bd}$ the appearance of ``escaping" phenomenon for minimisers of harmonic map problem and Ginzburg-Landau problem strongly depends on the value $|a|$.

\medskip 

\nd {\bf Harmonic map problem}. 

\begin{prop}
Let $u_{bd}$ be defined as above and $\lambda_1(\Omega)$ be the first eigenvalue of $(-\Delta)$ on $\Omega$ with zero Dirichlet data. Then 
\begin{enumerate}
\item {\bf ``Non-escaping"}: For $a^2\leq\lambda_1(\Omega)$, the map $u_a$ defined in \eqref{aleg} is the unique minimising harmonic map in $\mcA\cap H^1(\Omega, \bS^{n-1})$.

\item {\bf ``Escaping"}: For $a^2> \lambda_1(\Omega)$ every minimising harmonic map $u\in \mcA\cap H^1(\Omega, \bS^{n-1})$ ``escapes" from the horizontal plane $\R^2\times \{0_{\R^{n-2}}\}$ and $u$ is unique up to orthogonal transformations $R\in O(n)$ fixing every point of the horizontal plane. Moreover, the harmonic map $u_{a}$ is unstable. 
\end{enumerate} 
\end{prop}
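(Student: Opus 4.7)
\medskip

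\noindent\textbf{Proof plan.}
A direct computation yields $\Delta u_a=-a^2u_a$ and $|\nabla u_a|^2\equiv a^2$ in $\Omega$, so $u_a\in H^1(\Omega,\bS^{n-1}\cap e_3^\perp)$ is a weakly harmonic map. Applying Proposition~\ref{Lem:HPInPlane+Stab=>Min} with $e=e_3$, the stability of $u_a$ along $e_3$ reduces to
\[
\int_\Omega \big[|\nabla\varphi|^2-a^2\varphi^2\big]\,dx\geq 0\quad\text{for all }\varphi\in H^1_0(\Omega),
\]
which by the Rayleigh characterisation of $\lambda_1(\Omega)$ holds if and only if $a^2\leq\lambda_1(\Omega)$; in the opposite regime $a^2>\lambda_1(\Omega)$, taking $\varphi$ to be the first Dirichlet eigenfunction yields a strictly negative second variation, so $u_a$ is unstable and in particular cannot be a minimiser.

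For part 1, assuming $a^2\leq\lambda_1(\Omega)$, I would apply Proposition~\ref{Lem:HPInPlane+Stab=>Min} directly: stability gives that $u_a$ is minimising, and since $u_a(\Omega)\subset\R^2\times\{0_{\R^{n-2}}\}=Span\,u_{bd}(\partial\Omega)$ (for $a\neq 0$ the boundary image is a nontrivial arc of $\bS^1\times\{0\}$, so its span is the full horizontal plane; the case $a=0$ is trivial since $u_{bd}$ is then constant), the ``non-escaping plus stable'' conclusion of the same proposition provides uniqueness.

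For part 2, the instability of $u_a$ is already established. To show every minimiser must escape from $V:=\R^2\times\{0_{\R^{n-2}}\}$, I argue by contradiction: suppose $u\in\mcA\cap H^1(\Omega,\bS^{n-1})$ is a minimiser with $u(\Omega)\subset V$. Then $u$ is $\bS^1\times\{0_{\R^{n-2}}\}$-valued on the simply connected planar domain $\Omega$, and since the boundary trace $u_{bd}$ admits the smooth lift $ax_1$ and so has zero winding number, standard lifting theory for $\bS^1$-valued $H^1$ maps yields $u=(\cos\phi,\sin\phi,0,\ldots,0)$ with $\phi\in H^1(\Omega,\R)$ and trace equal to $ax_1$ modulo $2\pi$. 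Then $E(u)=\tfrac12\int_\Omega|\nabla\phi|^2\,dx$, and minimality forces $\phi$ to be the harmonic extension of its trace, i.e.\ $\phi=ax_1$, whence $u=u_a$, contradicting the instability of $u_a$.

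Finally, since every minimiser escapes $V$ and $u_{bd}\cdot e_3\equiv 0$ on $\partial\Omega$, reflecting the $e_3$-component of any minimiser produces a distinct minimiser with the same boundary trace and energy, so uniqueness fails. The $O(n)$-classification (``$Ru$ with $R$ fixing $V$ pointwise'') is then an immediate consequence of the last assertion in Theorem~\ref{prop:HMP_inv}. The most delicate step is the $H^1$-lifting for $\bS^1$-valued Sobolev maps on a simply connected planar domain with zero-winding boundary data; one may either invoke standard lifting theory or bypass it by exploiting interior elliptic regularity of the minimiser (as a weakly harmonic map whose range lies in the smooth submanifold $\bS^1\times\{0_{\R^{n-2}}\}$) together with a global lift on the simply connected domain.
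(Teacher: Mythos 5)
Your proof is correct and follows essentially the same route as the paper's: the stability threshold at $a^2=\lambda_1(\Omega)$ via the Rayleigh quotient, the $H^1$-lifting on the simply connected domain to show that any non-escaping minimiser must coincide with $u_a$ (you derive $\phi=ax_1$ from Dirichlet minimality of the lift, the paper from harmonicity of the lift of a harmonic map --- a cosmetic difference), instability of $u_a$ when $a^2>\lambda_1(\Omega)$, and the paper's uniqueness machinery (Proposition~\ref{Lem:HPInPlane+Stab=>Min}, Theorem~\ref{prop:HMP_inv}) for the non-escaping uniqueness and the $O(n)$-classification. One small imprecision to fix in part 2: a minimiser escaping $V$ is only guaranteed to have $u_j\not\equiv 0$ for \emph{some} $j\geq 3$, not necessarily $j=3$, so the reflection producing a second minimiser should be taken in such a coordinate $e_j$ (this is harmless, since $u_{bd}\cdot e_j=0$ for every $j\geq 3$, so any such reflection preserves the boundary data).
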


\begin{proof} We first note that for any $a \in \R$ the set $\mcA\cap H^1(\Omega, \bS^{n-1})\neq \emptyset$ since $u_{a}$ belongs to this set and by the direct methods there exists a minimising harmonic map $u\in \mcA\cap H^1(\Omega, \bS^{n-1})$. Moreover, we observe that $u_a=(\cos \f_a, \sin \f_a, 0, \dots, 0)$ with $\f_a(x)=ax_1$ for every $x\in \R^2$; in particular, $\f_a$ is harmonic and hence $u_a$ is a weakly $\bS^{n-1}$-valued harmonic map, i.e., 
$$
-\Delta u_a = |\nabla u_a|^2 u_a \hbox{ in } \Omega.
$$
Taking any $v \in H_0^1(\Omega, \R^n)$ such that $|u_a + v|=1$ and following the proof of the Theorem~\ref{prop:HMP} we obtain
\be\label{dv}
\int_\Omega |\nabla (u_a+v)|^2\,dx-\int_\Omega |\nabla u_a|^2\,dx =\int_\Omega|\nabla v|^2- \underbrace{|\nabla u_a|^2}_{=|\nabla \f_a|^2=a^2}|v|^2\,dx=:{\cal D}(v).
\ee
\vskip 0.1cm

\noindent{\it 1.} If $a^2 \leq \lambda_1(\Omega)$, then Poincar\'e's inequality and \eqref{dv} yield ${\cal D}(v) \geq 0$ for every $v\in H^1_0(\Omega, \R^n)$; in particular, $u_a$ is a minimising harmonic map. Moreover, by Theorem~\ref{prop:HMP_inv}, we deduce that $u_a$ is the unique minimising harmonic map in $\mcA\cap H^1(\Omega, \bS^{n-1})$.

\vskip 0.1cm

\noindent{\it 2.} Let us take $a^2 >\lambda_1(\Omega)$.  Assume by contradiction that $u$ does not ``escape" from the horizontal plane, i.e., 
$u\in H^1(\Omega, \bS^{1}\times\{0_{\R^{n-2}}\})$. In particular, $u$ is a minimiser of $E$ restricted to ``horizontal" configurations 
$\mcA\cap H^1(\Omega, \bS^{1}\times\{0_{\R^{n-2}}\})$. Since $\Omega$ is a simply connected domain, then any map $w \in \mcA\cap H^1(\Omega, \bS^{1}\times\{0_{\R^{n-2}}\})$ can be represented as $w=(\cos \f, \sin \f, 0, \dots, 0)$ with the lifting $\varphi \in H^1(\Omega)$ and $\f=\f_{bd}$ on $\partial \Omega$. In particular, if $\f$ is the lifting of $u$, then $\f$ is harmonic (because $u$ is a harmonic map); as $\f=\f_{bd}$ on $\partial \Omega$, one obtains that $\f(x)=\f_a(x)=a x_1$ in $\Omega$. Therefore $u=u_{a}$ in $\Omega$, i.e.,  $u_a$ is a minimising harmonic map. We take $v \in H_0^1(\Omega)$ and compute the second variation of $E$ at the point $u_a$ in the direction $v e_3$ (that is orthogonal to $u_a$) to obtain
$$
\frac{d^2}{dt^2}\big|_{t=0}E\big(\frac{u_a+t v e_3}{|u_a+t v e_3|} \big) = \int_\Omega |\nabla v|^2-\underbrace{|\nabla u_a|^2}_{=a^2}|v|^2\,dx.
$$
Since $a^2 >\lambda_1(\Omega)$ we can find $v\in H^1_0(\Omega)$ making the second variation negative (in the direction $v e_3$) which contradicts the minimality of $u_a$ over $\mcA\cap H^1(\Omega, \bS^{n-1})$. This proves in particular that the (unique) ``horizontal" critical point $u_{a}$ of $E$ is unstable (when vertical directions are admissible in the target space $\R^n$, i.e., $n\geq 3$).
The uniqueness of minimising harmonic maps is then given by Theorem \ref{prop:HMP_inv}.
\end{proof}

\medskip

\nd {\bf Ginzburg-Landau problem}. We can transfer the above ideas to the case of the Ginzburg-Landau problem. In particular, we have:

\vskip 0.1cm

$\bullet$ {\it ``Escaping" phenomenon for $a^2>\lambda_1(\Omega)$ and small enough $\eps\leq \eps_a$}. Similar to  minimising harmonic maps, in the case $a^2>\lambda_1(\Omega)$ the ``escaping" phenomenon happens for minimisers of $E_\eps$ over $\mcA$ within the boundary data $u_{bd}$, provided 
$\eps\leq \eps_a$ with $\eps_a$ depending on $a$. The proof uses the fact that as $\eps \to 0$ the minimisers $u_\eps$ of $E_\eps$ over $\mcA$ converge in $H^1(\Omega)$ 
to minimising harmonic maps $u$, the key point is $H^1$ regularity of the minimising harmonic maps $u$ 
(see e.g. \cite{BBH}). The uniqueness holds up to isometries $R\in O(n)$ keeping every point of the horizontal plane fixed. 

$\bullet$ {\it ``Non-escaping" phenomenon for any $a \in \R$ and large enough $\eps\geq \eps_0$}. Using Remark~\ref{rem:convex} we know that $E_\eps$ is strictly convex over $\mcA$ for large enough $\eps\geq \eps_0$ (here $\eps_0$ depends only on $W'(1)$ and $\lambda_1(\Omega)$ and is {\it independent} of $a$). Therefore there exists a unique critical point (that is the global minimiser) $u_\eps$ of $E_\eps$ over $\mcA$. 
Moreover, $u_\eps$ is ``non-escaping" for every $\eps\geq \eps_0$ and for every $a\in \R$ since by Theorem \ref{thm:main_inv}  the existence of an ``escaping" minimiser $u_\eps$ of $E_\eps$ over $\mcA$ would break the uniqueness (as the reflected map ${\cal I} u_\eps\neq u_\eps$ w.r.t. the horizontal plane would also be a minimiser of $E_\eps$ over $\mcA$).

\section*{Acknowledgment.} R.I. acknowledges partial support by the ANR project ANR-14-CE25-0009-01. V.S. acknowledges  support by the EPSRC grant EP/K02390X/1 and the Leverhulme
grant RPG-2014-226. R.I and V.S. thank for hospitality  and
support of the Basque Center for Applied  Mathematics  (BCAM) where a part of this work was done. A.Z. was partially supported by a Grant of the Romanian National
Authority for Scientific Research and Innovation, CNCS-UEFISCDI, project number
PN-II-RU-TE-2014-4-0657; by the Basque Government through the BERC
2014-2017 program; and by the Spanish Ministry of Economy and Competitiveness
MINECO: BCAM Severo Ochoa accreditation SEV-2013-0323.

\def\cprime{$'$}

\end{document}